\newtheorem{lem}{Lemma}
\newtheorem{thm}{Theorem}
\newtheorem{prop}{Proposition}
\newtheorem{rmk}{Remark}
\title{Existence of Weak Conical K\"{a}hler-Einstein Metrics Along Smooth Hypersurfaces}
\author{Chengjian Yao}
\date{Aug 19 ,2013}                                           
\begin{document}
\maketitle

\begin{abstract}
The existence of \emph{weak conical K\"{a}hler-Einstein} metrics along smooth hypersurfaces with angle between $0$ and $2\pi$ is obtained by studying a smooth continuity method and a \emph{local Moser's iteration} technique. In the case of negative and zero Ricci curvature, the $C^0$ estimate is unobstructed; while in the case of positive Ricci curvature, the $C^0$ estimate obstructed by the properness of the \emph{twisted K-Energy}. As soon as the $C^0$ estimate is achieved, the local Moser iteration could improve the \emph{rough bound} on the approximations to a \emph{uniform $C^2$ bound} , thus produce a \emph{weak conical K\"{a}hler-Einstein} metric. The method used here do not depend on the bound of any background conical K\"{a}hler metrics.
\end{abstract}

\section{Introduction}
The existence of smooth K\"{a}hler-Einstein metrics with negative Ricci curvature is solved independently by Aubin and Yau, and Ricci-flat case by Yau in his celebrated work on the Calabi's conjecture.  In the last part of the paper of Yau \cite{Yau}, complex Monge-Amp$\grave{\text{e}}$re equation with degenerate and meromorphic right hand side was also considered. For the conical K\"{a}hler-Einstein metrics, the Riemann surfaces situation was well studied by Troyanov \cite{Troyanov} and McOwen \cite{McOwen}. The high dimensional was considered by Tian in \cite{Tian}. And in recent years, partly motivated by Donaldson's project of using conical K\"{a}her-Einstein metrics as a continuity method of solving smooth K\"{a}hler-Einstein problem on Fano manifolds, lots of results along this line have been achieved, see \cite{Campana}, \cite{Henri}, and \cite{Yanir}.  Quite recently, Chen, Donaldson and Sun \cite{Chen1},\cite{Chen2},\cite{Chen3} proved the smooth Yau-Tian-Donaldson conjecture in Fano case, where conical K\"{a}hler-Einstein metrics are used as a main ingredient. The existence of conical K\"{a}hler-Einstein metrics still has its own interest in studying more general compact K\"{a}hler manifolds.\\

The aim of this paper is to study the existence of \emph{weak conical K\"{a}hler-Einstein} metrics with conical singularities along smooth hypersurfaces.  By saying a \emph{weak conical K\"{a}hler-Einstein} metric $\omega$ on $X$ with cone angle $2\pi \beta$ along a smooth hypersurface $D$, we mean that $\omega$ is smooth K\"{a}hler-Einstein on $X\backslash D$, and near each point $p\in D$, there exists coordinate neighborhood $(z,z_2,\cdots, z_n)\in U$ such that locally $D$ is defined by $\{z=0\}$, and $\omega$ is quasi-isometric to a local model metric $\omega_\beta=\sqrt{-1}\{|z|^{2\beta-2}\mathrm{d}z\wedge\mathrm{d}\bar{z}+\sum_{i=2}^n \mathrm{d}z_i\wedge\mathrm{d}\bar{z}_i\}$ on $U$.

By studying the smoothing continuity path and proving uniform $C^2$ estimate in terms of local model conical metric along the hypersurface, we could get \emph{weak conical K\"{a}hler-Einstein} metrics. Let $X$ be a compact K\"{a}hler manifold, $\omega_0$ be a smooth K\"{a}hler metric on $X$, and $D$ be a smooth hypersurface, $\beta\in (0,1)$, suppose the cohomological condition $c_1(X)=\mu [\omega_0]+(1-\beta)c_1(L_D)$ is satisfied. By studying a smoothing of the current equation $Ric(\omega_\phi)=\mu\omega_\phi+(1-\beta)\chi$, we get the main theorem of this paper

\begin{thm}There are three situations:\\
1.For $\mu<0$, there exists \emph{weak conical K\"{a}hler-Einstein} metric with Ricci curvature $\mu$;\\
2.For $\mu=0$, there exists \emph{weak conical K\"{a}hler-Einstein} metric with zero Ricci curvature;\\
3.For $\mu>0$, under the assumption that $c_1(L_D)$ is nonnegative and the \emph{twisted K-Energy} is proper, there exists \emph{weak conical K\"{a}hler-Einstein} metric with Ricci curvature $\mu$.\\
\end{thm}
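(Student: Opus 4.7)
The plan is to regularize the current equation by replacing $\chi$ with a smooth family $\chi_\epsilon$ depending on a parameter $\epsilon>0$ (obtained, e.g., by setting $\chi_\epsilon = \Theta_h + \sqrt{-1}\partial\bar\partial\log(|s|_h^2+\epsilon)$ for a Hermitian metric $h$ on $L_D$ with defining section $s$, so that $\chi_\epsilon\to\chi$ as currents), reducing the problem to a smooth family of complex Monge--Amp\`ere equations of the form
\[
(\omega_0+\sqrt{-1}\partial\bar\partial\phi_\epsilon)^n = \frac{e^{f_0-\mu\phi_\epsilon}}{(|s|_h^2+\epsilon)^{1-\beta}}\,\omega_0^n,
\]
with $f_0$ determined by the cohomological identity. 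For each fixed $\epsilon>0$ the right-hand side is smooth and strictly positive, so a standard smooth continuity method (in a parameter $t\in[0,1]$ interpolating from a trivial equation to the target at $t=1$) produces a smooth solution $\phi_\epsilon$, provided the a priori estimates close up. The overall strategy is then to prove $C^0$, $C^2$, and higher-order estimates that are \emph{uniform in $\epsilon$}, and pass to the limit $\epsilon\to 0$.

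The $C^0$ estimate splits according to the sign of $\mu$. For $\mu<0$ the maximum principle applied directly to the regularized Monge--Amp\`ere equation, together with the integrability $(|s|_h^2+\epsilon)^{\beta-1}\in L^p$ for some $p>1$, gives the bound immediately. For $\mu=0$ one invokes Yau's $L^\infty$ estimate via $L^p$ iteration on the Monge--Amp\`ere measure, after the normalization $\sup\phi_\epsilon=0$. For $\mu>0$, where the linearization may develop a kernel and no comparison principle applies, the properness assumption on the twisted K-energy is indispensable: one shows that the twisted Mabuchi functional is bounded along the continuity path, so properness yields a uniform $J$-functional bound and, via a Ding--Tian-type argument, the desired $\|\phi_\epsilon\|_{L^\infty}$ bound; the nonnegativity of $c_1(L_D)$ is what allows the twisted functional to dominate the untwisted one.

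With $C^0$ in hand, a Chern--Lu / Yau inequality applied to an auxiliary quantity of the form $\log\operatorname{tr}_{\omega_0}\omega_{\phi_\epsilon} - A\phi_\epsilon$ produces only a \emph{rough} bound $\operatorname{tr}_{\omega_0}\omega_{\phi_\epsilon}\le C(|s|_h^2+\epsilon)^{\beta-1}$ reflecting the expected blow-up near $D$, and in particular no uniform smooth bound on $\omega_{\phi_\epsilon}$. The main analytic task is to upgrade this rough bound to a \emph{uniform $C^2$ bound with respect to the conical model} $\omega_\beta$. I would carry this out locally in coordinate charts around points of $D$: write the elliptic inequality satisfied by $u_\epsilon:=\operatorname{tr}_{\omega_\beta}\omega_{\phi_\epsilon}$, use the rough bound to place the inhomogeneous terms in $L^p(\omega_\beta)$ for a suitable $p=p(\beta)>n$, and then run \emph{Moser iteration with respect to the conical measure} using the Sobolev inequality for $\omega_\beta$, converting the rough $L^p$ control into the desired $L^\infty$ bound on $u_\epsilon$, independent of $\epsilon$.

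The hard step is precisely this local Moser iteration: it requires a conical Sobolev/Poincar\'e inequality robust under the smoothing, and the nonlinear terms coming from $\operatorname{Ric}(\omega_0)$ and from derivatives of $\log(|s|_h^2+\epsilon)$ must be absorbed in the appropriate $L^p$ norms uniformly in $\epsilon$; this is the step that distinguishes the argument from a naive application of Aubin--Yau. Once the uniform $C^2$ bound is in hand, Evans--Krylov and Schauder bootstrap on $X\setminus D$ give uniform interior higher regularity, and a subsequential limit $\phi\in C^{1,\alpha}_{\mathrm{loc}}(X\setminus D)\cap L^\infty(X)$ is extracted. By the uniform two-sided comparison with $\omega_\beta$, the limit current $\omega_\phi$ is smooth K\"ahler--Einstein on $X\setminus D$ and quasi-isometric to $\omega_\beta$ near $D$, producing the weak conical K\"ahler--Einstein metric asserted in all three cases.
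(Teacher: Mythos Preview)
Your global architecture matches the paper's: regularize $\chi$ by $\chi_\epsilon$, run a smooth continuity method for each $\epsilon$, obtain $C^0$ by cases, get a rough $C^2$ bound, upgrade it by local Moser iteration, and let $\epsilon\to 0$. But two steps are mis-specified in ways that would make the argument fail as written.

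For the rough bound, the paper applies Chern--Lu in the \emph{opposite} direction from the quantity you wrote: it bounds $\mathrm{tr}_{\omega_{\phi_\epsilon}}\omega_0$ using $\mathrm{Ric}(\omega_{\phi_\epsilon})\ge \mu\,\omega_{\phi_\epsilon}-C\omega_0$ together with the bisectional curvature bound on the fixed metric $\omega_0$; the upper bound $\omega_{\phi_\epsilon}\le C(|S|_h^2+\epsilon)^{\beta-1}\omega_0$ then comes from the Monge--Amp\`ere equation. The Aubin--Yau quantity $\log\mathrm{tr}_{\omega_0}\omega_{\phi_\epsilon}-A\phi_\epsilon$ you propose requires a lower bound on $\Delta_{\omega_0}$ of the right-hand-side exponent, which is unavailable here since $\chi_\epsilon$ has no uniform upper bound near $D$.

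More importantly, the Moser iteration in the paper is run \emph{with respect to the solution metric $\omega_{\phi_\epsilon}$}, not the conical model $\omega_\beta$. The differential inequality is arranged to be $\Delta_{\omega_{\phi_\epsilon}} f\ge 0$, one integrates against $\omega_{\phi_\epsilon}^n$ with cutoffs on $\omega_{\phi_\epsilon}$-metric balls, and the Sobolev inequality used is that of $(X,\omega_{\phi_\epsilon})$, whose constant is uniform because the rough bound gives a Ricci lower bound, a diameter bound, and fixed volume. Your plan to iterate against $\omega_\beta$ runs into the obstacle that, with only the rough bound in hand, $\omega_{\phi_\epsilon}$ is not uniformly elliptic relative to $\omega_\beta$ in the tangential directions, so the iteration does not close. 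The paper also inserts a step you omit: it first compares with $\omega_{\beta',\epsilon}$ for a slightly smaller $\beta'<\beta$ and adds auxiliary barriers $C(\beta-\beta')^{-2}(|z|^2+\epsilon)^{\beta-\beta'}+C\beta^{-2}(|z|^2+\epsilon)^\beta$ to absorb the singular inhomogeneity and make $f$ genuinely subharmonic; the two-sided comparison with $\omega_{\beta,\epsilon}$ is then recovered from the bound on $\mathrm{tr}_{\omega_{\beta',\epsilon}}\omega_{\phi_\epsilon}$ together with the rough bound and the volume form. Finally, the hypothesis $c_1(L_D)\ge 0$ in the $\mu>0$ case is not used to compare twisted and untwisted energies; its role is to ensure $\chi_\epsilon\ge 0$ along the path, hence a uniform Ricci lower bound and Sobolev constant, which is what is needed to pass from the $(I-J)$ bound supplied by properness to the $C^0$ bound.
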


Brendle proved the existence of Ricc-flat K\"{a}hler metrics with conical singularity along a smooth hypersurface under the assumption that the cone angle $\beta <\frac{1}{2}$. A group of authors Campana, Guenancia and P$\breve{\text{a}}$un \cite{Campana}  dealt with the general situation where the divisor $D$ is allowed to have simple normal crossing, under the assumption that $\beta \leq \frac{1}{2}$. Around the same time, another group of authors Jeffres, Mazzeo and Rubinstein obtained the existence of \emph{conical K\"{a}hler-Einstein} metrics with conical singularities along a smooth hypersurface  by studying the classical continuity path of conical singular metrics for general angle $\beta \in (0,1)$. \\

The method adopted here differs from the work of Brendle \cite{Brendle} and Jeffres, Mazzeo and Rubinstein\cite{Yanir} in the sense that by smoothing the \emph{delta distribution} $\chi$ in the equation, we avoided dealing with continuity path of singular metrics. \\

In Section 2 we primarily set up the problem of solving conical K\"{a}hler-Einstein metric with positive Ricci curvature on a Fano manifold, and we also outline how uniform $C^0$ estimate is obtained for the \emph{Smooth Continuity Path}. Then following the idea of using Chern-Lu inequality as in Chen, Donaldson and Sun \cite{Chen1}, a rough $C^2$ bound is derived. Section 3 explains how to go from this rough $C^2$ bound to a finer $C^2$ bound $C^{-1}\omega_{\beta,\epsilon}\leq \omega_{\phi_\epsilon} \leq C\omega_{\beta,\epsilon}$ by using a local Moser iteration.  In section 4,  it is illustrated how this idea could be applied to the general case of the current equation in the main theorem. The last section discusses some questions about the regularity problem of the \emph{weak conical K\"{a}hler-Einstein} metric. \\

I was motivated to use a smooth Continuity method after reading the series of papers \cite{Chen1},\cite{Chen2},\cite{Chen3} by Chen, Donaldson and Sun and the hope that the angle assumption $\beta\leq \frac{1}{2}$ in Campana, Guenancia and P$\breve{\text{a}}$un \cite{Henri} could be removed by doing Yau's estimate locally. While preparing this note, I noticed the beautiful paper of  Guenancia and P$\breve{\text{a}}$un where they removed the assumption on $\beta$ and proved the existence of \emph{weak conical K\"{a}hler-Einstein} metrics for the more general case of simple normal crossing divisors.  However, the method used here, the \emph{local Moser iteration}, which could deal with the situation of smooth hypersurfaces, is different from their work and hopefully this is still of some interest.\\

$\mathbf{Acknowledgement}$: I would like to express great thanks to Professor Xiuxiong Chen, who gave me constantly encouragement and showed great patience during working on this problem. I would also like to thank Dr. Song Sun, Dr. Haozhao Li  and Dr. Yuanqi Wang for helpful discussion and providing useful notes at some points. The discussion with Yu Zeng and Long Li also helped me clarify several things which were not clear to me before.

\section{Setting Up for Fano Case}
Suppose $X$ is a smooth Fano manifold and $D$ is a smooth anti-canonical divisor, following \cite{Henri}, we define a \emph{weak conical K\"{a}hler-Einstein metric} with conical angle $2\pi \beta$ to be a metric which is smooth K\"{a}hler-Einstein on $X\backslash D$ and quasi-isometric to a local model metric $\sqrt{-1}\{\beta^2|z|^{2\beta-2}\mathrm{d}z\wedge\mathrm{d}\bar{z}+\sum_{i=2}^n\mathrm{d}z_i\wedge\mathrm{d}\bar{z}_i\}$ near the divisor $D$ where $z=0$ is local defining function of $D$.

Under the assumption the \emph{Twisted K-Energy} (see \cite{Chen1} or \cite{Li}) is proper,the existence of \emph{weak
K\"{a}hler-Einstein} metric with conical angle $2\pi \beta$ for $\beta\in (0,1)$ is established by a slightly different way from \cite{Yanir} in the sense that we are solving \emph{smooth} continuity method with uniform $C^2$ bound instead of dealing with the continuity path for the conical singular metrics directly.

Take $\omega_0$ to be a fixed smooth background K\"{a}hler metric in $c_1(X)$ and take a Hermitian metric $h$ on $L_D$ with curvature form $\omega_0$,  then we could consider the Continuity Path:


\begin{equation}
\text{Ric } \omega_{\phi_\epsilon(t)}=t \omega_{\phi_\epsilon(t)}+(\beta-t)\omega_0+(1-\beta)\chi_\epsilon
\end{equation}

where the starting point is $\omega_{\phi_\epsilon(0)}=\omega_{\varphi_\epsilon}$ satisfying $\text{Ric } \omega_{\varphi_\epsilon}=\beta \omega_0+(1-\beta)\chi_\epsilon$, and $\chi_\epsilon=\omega_0+\sqrt{-1}\partial\bar{\partial}\text{log}(|S|_h^2+\epsilon)$ is smooth positive closed $(1,1)$ form in $c_1(X)$ approaching the $\emph{Delta Distribution $\chi$}$ along $D$. This could be done following the Calabi's \emph{Volume form Conjecture} solved by Yau.

Solve the smooth K\"{a}hler-Einstein problem is well-known to be obstructed by some \emph{Stability Condition} (See the recent work relating K-Stability and K\"{a}hler-Einstein metrics on Fano case by \cite{Chen1}, \cite{Chen2} and \cite{Chen3}) algebraically and \emph{Properness} of K-Energy \cite{Tian} analytically). 

Similarly, in the case of conical K\"{a}hler-Einstein situation, analytically we also need the properness of some functional, namely the \emph{Twisted K-Energy}. Let's recall the definitions:


$$E(\phi)=-n\int_0^1\mathrm{d}t\int_X \dot{\phi}_t (Ric(\omega_{\phi_t})-\omega_{\phi_t})\wedge\omega_{\phi_t}^{n-1}$$

$$E_{(1-\beta)D}(\phi)=E(\phi)+(1-\beta)J_\chi(\phi)$$

$$E_{\epsilon,(1-\beta)D}(\phi)=E(\phi)+(1-\beta)J_{\chi_\epsilon}(\phi)$$

$$J_\chi(\phi)=n\int_0^1\mathrm{d}t\int_X \dot{\phi}_t (\chi-\omega_{\phi_t})\wedge\omega_{\phi_t}^{n-1}$$

$$J_{\chi_\epsilon}(\phi)=n\int_0^1\mathrm{d}t\int_X \dot{\phi}_t (\chi_\epsilon-\omega_{\phi_t})\wedge\omega_{\phi_t}^{n-1}$$

 for any path of smooth potential $\phi_t$ connecting $\omega_0$ and $\omega_\phi$. And by saying \emph{Proper}, we mean that there exists constant $C_1$ and $C_2$ s.t. $E_{(1-\beta)D}(\phi)\geq C_1 (I-J)(\phi)-C_2$, where $I, J$ are the classical functionals used in the reference, for instance \cite{Li}.

\begin{prop}
If $E_{(1-\beta)D}$ is proper, then the above continuity path is solvable up to $t=\beta$ for any $\epsilon \in (0,1]$.
\end{prop}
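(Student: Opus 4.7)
The plan is to solve equation (1) at fixed $\epsilon\in(0,1]$ by the classical continuity method. Define
\[T_\epsilon=\{t\in[0,\beta]:(1)\text{ admits a smooth solution }\phi_\epsilon(t)\},\]
and show $T_\epsilon=[0,\beta]$ by verifying nonemptiness, openness of $T_\epsilon\cap[0,\beta)$ in $[0,\beta)$, and closedness of $T_\epsilon$ in $[0,\beta]$. First I would rewrite (1) scalarly: letting $h_0$ denote the Ricci potential of $\omega_0$ (so $\text{Ric}(\omega_0)-\omega_0=\sqrt{-1}\partial\bar\partial h_0$), applying $\sqrt{-1}\partial\bar\partial$ to the log-volume identity reduces (1) to a complex Monge--Amp\`ere equation
\[\omega_{\phi_\epsilon(t)}^n=\frac{e^{h_0-t\phi_\epsilon(t)+c_t}}{(|S|_h^2+\epsilon)^{1-\beta}}\,\omega_0^n,\]
with an $\epsilon,t$-dependent normalization constant $c_t$; the right-hand side is smooth and strictly positive for every fixed $\epsilon>0$. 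Nonemptiness of $T_\epsilon$ at $t=0$ then follows from Yau's solution of the Calabi volume conjecture applied to the smooth positive form $\beta\omega_0+(1-\beta)\chi_\epsilon\in c_1(X)$.

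For openness on $[0,\beta)$, I would linearize the scalar equation at a smooth solution $\phi_\epsilon(t)$; the linearization is $\Delta_{\omega_{\phi_\epsilon(t)}}+t$. Using (1) together with $\beta-t>0$, one has $\text{Ric}(\omega_{\phi_\epsilon(t)})\ge t\,\omega_{\phi_\epsilon(t)}+(\beta-t)\omega_0$ with the second summand strictly positive, so Bochner--Lichnerowicz yields $\lambda_1(-\Delta_{\omega_{\phi_\epsilon(t)}})>t$ strictly. Hence the linearization is an isomorphism between appropriate H\"older spaces of mean-zero functions, and the implicit function theorem supplies solvability for all $t'$ in a neighborhood of $t$ in $[0,\beta]$.

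The hard part is closedness, which reduces to a uniform $C^0$ bound on a sequence of solutions $\phi_\epsilon(t_i)$ with $t_i\to t^\ast\in(0,\beta]$. Here the properness hypothesis on $E_{(1-\beta)D}$ enters. A derivative-along-the-path calculation in the spirit of Bando--Mabuchi (see \cite{Tian}, \cite{Li}) shows that $t\mapsto E_{\epsilon,(1-\beta)D}(\phi_\epsilon(t))$ is non-increasing in $t$ and hence uniformly bounded above by its value at $t=0$. A routine comparison, using the smoothness of $\chi_\epsilon$ for the fixed $\epsilon>0$, transfers the properness of $E_{(1-\beta)D}$ to properness of $E_{\epsilon,(1-\beta)D}$, so that $(I-J)(\phi_\epsilon(t_i))\le C$. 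A standard Green-function/Moser argument then upgrades this into $\|\phi_\epsilon(t_i)\|_{C^0}\le C$. With a $C^0$ bound and $\epsilon$ fixed, the scalar right-hand side is uniformly bounded away from $0$ and $\infty$, so Yau's $C^2$ estimate, Calabi's $C^3$ estimate, and Evans--Krylov/Schauder theory give uniform $C^{k,\alpha}$ bounds; passing to a subsequence produces a smooth limit solving (1) at $t^\ast$.

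The principal obstacle is the uniform $C^0$ estimate. It requires verifying the monotonicity of the smoothed twisted K-energy along the continuity path, reconciling the smoothed functional $E_{\epsilon,(1-\beta)D}$ with the hypothesized proper functional $E_{(1-\beta)D}$ (for fixed $\epsilon$, but in a way that is robust enough for the bound to be effective), and then extracting $C^0$ control from the resulting $(I-J)$ bound. Everything else in the argument is either structural, as part of the continuity machinery, or classical elliptic regularity for smooth Monge--Amp\`ere equations.
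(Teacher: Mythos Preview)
Your outline is correct and follows essentially the same route as the paper: continuity method with $t=0$ supplied by Yau, openness via the linearization $\Delta_{\omega_{\phi_\epsilon(t)}}+t$ and a Ricci lower bound, and closedness via monotonicity of the smoothed twisted K-energy together with properness to control $I-J$ and hence $C^0$.

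The one substantive difference worth flagging concerns uniformity in $\epsilon$. The paper's proof is written so that every constant is independent of $\epsilon$: it uses Kolodziej's $L^p$ estimate (the right-hand side $e^{h_{\omega_0}+C_\epsilon}(|S|_h^2+\epsilon)^{-(1-\beta)}$ is uniformly in $L^p$ for $1<p<\tfrac{1}{1-\beta}$) to bound $\varphi_\epsilon$ and hence the initial energy $E_{\epsilon,(1-\beta)D}(\varphi_\epsilon)\le C$ uniformly, and it obtains the properness transfer via the pointwise inequality $E_{\epsilon,(1-\beta)D}\ge E_{(1-\beta)D}-C$ (coming from $\log(|S|_h^2+\epsilon)\ge\log|S|_h^2$) rather than from ``smoothness of $\chi_\epsilon$ for fixed $\epsilon$''. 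Your phrasing --- bounding the energy by ``its value at $t=0$'' without bounding that value, and invoking smoothness of $\chi_\epsilon$ at fixed $\epsilon$ --- would yield $\epsilon$-dependent constants. That suffices for the proposition as literally stated, but the paper actually extracts a uniform $C^0$ bound on $\phi_\epsilon(\beta)$ here, which is what the rest of the argument (Chern--Lu, Moser iteration, $\epsilon\to 0$) relies on.
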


\begin{proof}
We assume $\omega_0$ has volume 1. Along the continuity path, the Twisted K-Energy is decreasing by a direct computation as soon as $t<\beta$. Since the initial metric satisfies the Monge-Amp$\grave{\text{e}}$re equation $\omega_{\varphi_\epsilon}^n=e^{h_{\omega_0}+C_\epsilon}\frac{\omega_0^n}{(|S|_h^2+\epsilon)^{1-\beta}}$, where $C_\epsilon$ is chosen constant (bounded above and below) so that $\int_X e^{h_{\omega_0}+C_\epsilon}\frac{\omega_0^n}{(|S|_h^2+\epsilon)^{1-\beta}}=\int_X \omega_0^n=1$,   for  $1<p<\frac{1}{1-\beta}$,
\begin{align*}
\int_X (\frac{e^{h_{\omega_0}+C_\epsilon}}{(|S|_h^2+\epsilon)^{1-\beta}})^p\omega_0^n
&=\int_X(\frac{e^{h_{\omega_0}}}{(|S|_h^2+\epsilon)^{1-\beta}})^p \omega_0^n\slash (\int_X \frac{e^{h_{\omega_0}}}{(|S|_h^2+\epsilon)^{1-\beta}} \omega_0^n)^p\\
&\leq C\int_X(\frac{e^{h_{\omega_0}}}{(|S|_h^2+\epsilon)^{1-\beta}})^p \omega_0^n\\
&\leq C
\end{align*}

Kolodziej's theorem tells us that $\varphi_\epsilon$ has uniform $C^\alpha$ bound since the right hand side of the equation is uniform in $L^p$ for some $p>1$. 

By an explicit formula(see \cite{Li}) of the \emph{Twisted K-Energy}: 

$$E_{\epsilon,(1-\beta)D}(\phi)=\int_X \emph{log }\frac{\omega_\phi^n}{\omega_0^n} \omega_\phi^n -\beta(I-J)(\phi)+\int_X \{h_{\omega_0}-(1-\beta)\text{log }(|S|_h^2+\epsilon)\}(\omega_0^n-\omega_\phi^n).$$

For the initial K\"{a}hler metrics $\omega_{\varphi_\epsilon}$, 

\begin{align*}
E_{\epsilon,(1-\beta)D}(\varphi_\epsilon)
&=\int_X \emph{log }\frac{\omega_{\varphi_\epsilon}^n}{\omega_0^n} \omega_{\varphi_\epsilon}^n -\beta(I-J)(\varphi_\epsilon)+\int_X \{h_{\omega_0}-(1-\beta)\text{log }(|S|_h^2+\epsilon)\}(\omega_0^n-\omega_{\varphi_\epsilon}^n)\\
&=\int_X C_\epsilon \omega_{\phi_\epsilon}^n+\int_X\{h_{\omega_0}-(1-\beta)\text{log }(|S|_h^2+\epsilon)\} \omega_0^n-\beta (I-J)(\varphi_\epsilon)\\
&\leq C
\end{align*}

where the last inequality holds since the other classical functionals $I$, $J$ are bounded as soon the potential has $C^0$ bound. By the assumption that $E_{(1-\beta)D}$ is proper, we know that $E_{\epsilon,(1-\beta)D}(\phi)\geq E_{(1-\beta)D}(\phi)-C\geq C_1 (I-J)(\phi)-C_2$, then similarly as the proof on Page 12 in \cite{Chen1}, we could solve the continuity path up to $t=\beta$ with uniform $C^0$ bound on $\phi_\epsilon(\beta)=\phi_\epsilon$. 

\end{proof}


The solution at the endpoint of the continuity method $\omega_{\phi_\epsilon(\beta)}=\omega_{\phi_\epsilon}$ satisfy Monge-Amp$\grave{\text{e}}$re equation for suitable normalization on $\phi_\epsilon$:

\begin{equation}
\omega_{\phi_\epsilon}=e^{-\beta \phi_\epsilon+h_{\omega_0}}\frac{\omega_0^n}{(|S|_h^2+\epsilon)^{1-\beta}}
\end{equation}


With $C^0$ bound in hand, by using global Chern-Lu inequality, a \emph{rough bound} could be derived, see \cite{Chen1} for example. For the reader's convenience, we outline the proof here. Keep in mind that from now on, all the constant $C$'s are constant varying from line to line, which are independent of $\epsilon$.

\begin{lem}{(Chern-Lu Inequality)}
Suppose $\omega$ and $\eta$ are two K\"{a}hler metrics on a compact K\"{a}hler manifold, if $Ric(\omega)\geq C_1\omega-C_2 \eta$ and the holomorphic bisectional curvature $R^\eta_{i\bar{j}k\bar{l}}\leq C_3(h_{i\bar{j}}h_{k\bar{l}}+h_{i\bar{l}}h_{k\bar{j}})$, then $$\Delta_\omega \text{log } \text{tr}_\omega \eta \geq C_1-(C_2+2C_3)\text{tr}_\omega \eta.$$
\end{lem}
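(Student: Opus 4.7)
The plan is to prove the bound pointwise via a standard Bochner-type computation in carefully chosen coordinates. Fix $p\in X$ and pick holomorphic coordinates centered at $p$ for which $g_{i\bar{j}}(p)=\delta_{ij}$ and $\partial_k g_{i\bar{j}}(p)=0$ (normal coordinates for $\omega$), and which simultaneously diagonalize $h$ against $g$, so that $h_{i\bar{j}}(p)=\lambda_i\delta_{ij}$ with $\lambda_i>0$. Setting $u:=\text{tr}_\omega\eta=\sum_i\lambda_i$, the strategy is to expand
$$\Delta_\omega\log u=\frac{\Delta_\omega u}{u}-\frac{|\partial u|_\omega^2}{u^2}$$
and arrange a Cauchy--Schwarz cancellation between the negative gradient-squared piece and a positive gradient piece that emerges inside $\Delta_\omega u$.

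To compute $\Delta_\omega u=g^{a\bar b}\partial_a\partial_{\bar b}(g^{i\bar j}h_{i\bar j})$ at $p$, I would use $\partial g(p)=0$ to discard all cross terms, express $\partial_a\partial_{\bar a}g^{i\bar j}(p)$ in terms of $R^\omega$ via differentiating $g^{i\bar k}g_{l\bar k}=\delta^i_l$ twice, and expand $\partial_a\partial_{\bar a}h_{i\bar j}$ through the K\"ahler identity $\partial_a\partial_{\bar a}h_{i\bar j}=-R^\eta_{a\bar a i\bar j}+h^{p\bar q}\partial_a h_{i\bar q}\partial_{\bar a}h_{p\bar j}$. After tidying up, this yields
$$\Delta_\omega u=\sum_i R^\omega_{i\bar i}\lambda_i-\sum_{a,i}R^\eta_{a\bar a i\bar i}+\sum_{a,i,p}\frac{|\partial_a h_{i\bar p}|^2}{\lambda_p}.$$
The third sum is nonnegative and contains the diagonal subsum (over $p=i$) that is precisely what I need: at $p$ one has $\partial_a u=\sum_i\partial_a h_{i\bar i}$, and Cauchy--Schwarz gives $|\partial_a u|^2\leq u\sum_i|\partial_a h_{i\bar i}|^2/\lambda_i$, so $|\partial u|_\omega^2/u^2$ is bounded by $(1/u)\sum_{a,i}|\partial_a h_{i\bar i}|^2/\lambda_i$. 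Discarding the remaining nonnegative off-diagonal part of the gradient term then yields
$$\Delta_\omega\log u\geq\frac{1}{u}\Bigl(\sum_i R^\omega_{i\bar i}\lambda_i-\sum_{a,i}R^\eta_{a\bar a i\bar i}\Bigr).$$

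It remains to insert the curvature hypotheses. The Ricci bound $\text{Ric}(\omega)\geq C_1\omega-C_2\eta$ reads $R^\omega_{i\bar i}\geq C_1-C_2\lambda_i$ at $p$, hence $\sum_i R^\omega_{i\bar i}\lambda_i\geq C_1 u-C_2\sum_i\lambda_i^2$. The bisectional bound gives $R^\eta_{a\bar a i\bar i}\leq C_3(\lambda_a\lambda_i+\delta_{ai}\lambda_a^2)$, so $\sum_{a,i}R^\eta_{a\bar a i\bar i}\leq C_3(u^2+\sum_i\lambda_i^2)$. Since $0\leq\lambda_i\leq u$ gives $\sum_i\lambda_i^2\leq u^2$, substituting produces $\Delta_\omega\log u\geq C_1-(C_2+2C_3)u$, as claimed. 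The main obstacle is purely technical bookkeeping: pinning down the sign in $\partial\bar\partial g^{-1}=-g^{-1}(\partial\bar\partial g)g^{-1}$ at $p$ and verifying that the Cauchy--Schwarz estimate truly dominates $|\partial u|_\omega^2/u^2$ by a subsum of the positive gradient term in $\Delta_\omega u$, so that the cancellation operates in the direction required for a lower bound on $\Delta_\omega\log u$.
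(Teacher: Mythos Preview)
Your argument is correct and is exactly the classical Chern--Lu/Bochner computation: choose normal coordinates for $\omega$ diagonalizing $\eta$, expand $\Delta_\omega u$ via the curvature identity, absorb $|\partial u|_\omega^2/u^2$ into the positive third-order term by Cauchy--Schwarz, and then feed in the two curvature hypotheses together with $\sum_i\lambda_i^2\leq u^2$. Every step checks out, including the bookkeeping you flagged as potential obstacles.

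There is nothing to compare on the paper's side: the paper states this lemma as a known result (the classical Chern--Lu inequality) and does not supply a proof; it simply invokes the inequality to deduce the rough bound in the subsequent proposition. Your write-up thus fills in precisely the standard proof that the paper takes for granted.
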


\begin{prop}{(\cite{Chen1})}
$\exists C$, s.t. $C^{-1}\omega_0\leq \omega_{\phi_\epsilon}\leq \frac{C}{(|S|_h^2+\epsilon)^{1-\beta}}\omega_0$.
\end{prop}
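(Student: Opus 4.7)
The plan is to apply the Chern--Lu inequality with $\omega=\omega_{\phi_\epsilon}$ and $\eta=\omega_0$, and deduce both bounds from a uniform upper bound on $\mathrm{tr}_{\omega_{\phi_\epsilon}}\omega_0$ combined with the Monge--Amp\`ere equation (2). Since the background metric $\omega_0$ is smooth its holomorphic bisectional curvature is bounded by a constant $C_3$. On the Ricci side, the continuity-path endpoint gives $\mathrm{Ric}(\omega_{\phi_\epsilon})=\beta\omega_{\phi_\epsilon}+(1-\beta)\chi_\epsilon$, and since $\chi_\epsilon$ is a smooth closed $(1,1)$-form in $c_1(X)$ that is (uniformly) bounded below by some $-C_0\omega_0$, we can write $\mathrm{Ric}(\omega_{\phi_\epsilon})\geq \beta\omega_{\phi_\epsilon}-(1-\beta)C_0\omega_0$. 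Feeding $C_1=\beta$ and $C_2=(1-\beta)C_0$ into the Chern--Lu lemma yields
$$\Delta_{\omega_{\phi_\epsilon}}\log\mathrm{tr}_{\omega_{\phi_\epsilon}}\omega_0\ \geq\ \beta\ -\ \bigl(C_2+2C_3\bigr)\,\mathrm{tr}_{\omega_{\phi_\epsilon}}\omega_0.$$

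Next I would run the standard maximum-principle argument on the auxiliary function $H=\log\mathrm{tr}_{\omega_{\phi_\epsilon}}\omega_0-A\phi_\epsilon$ for $A>C_2+2C_3$. Using $\Delta_{\omega_{\phi_\epsilon}}\phi_\epsilon=n-\mathrm{tr}_{\omega_{\phi_\epsilon}}\omega_0$, the above inequality becomes $\Delta_{\omega_{\phi_\epsilon}}H\geq (\beta-An)+(A-C_2-2C_3)\,\mathrm{tr}_{\omega_{\phi_\epsilon}}\omega_0$. At a maximum of $H$ the left side is $\leq 0$, so $\mathrm{tr}_{\omega_{\phi_\epsilon}}\omega_0\leq (An-\beta)/(A-C_2-2C_3)$ there; invoking the $C^0$ bound on $\phi_\epsilon$ from Proposition~1 and transferring back to arbitrary points produces the uniform estimate $\mathrm{tr}_{\omega_{\phi_\epsilon}}\omega_0\leq C$. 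Writing $\mu_1,\dots,\mu_n$ for the eigenvalues of $\omega_{\phi_\epsilon}$ relative to $\omega_0$, this says $\sum\mu_j^{-1}\leq C$, hence each $\mu_j\geq C^{-1}$, which gives the lower bound $\omega_{\phi_\epsilon}\geq C^{-1}\omega_0$.

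For the upper bound I would combine the lower bound with the Monge--Amp\`ere equation (2). Since $\prod_j \mu_j=\omega_{\phi_\epsilon}^n/\omega_0^n=e^{-\beta\phi_\epsilon+h_{\omega_0}}(|S|_h^2+\epsilon)^{-(1-\beta)}$, the $C^0$ bound on $\phi_\epsilon$ together with $\mu_i\geq C^{-1}$ for $i\neq j$ forces
$$\mu_j\ =\ \frac{\prod_i\mu_i}{\prod_{i\neq j}\mu_i}\ \leq\ \frac{C'}{(|S|_h^2+\epsilon)^{1-\beta}},$$
uniformly in $\epsilon$, giving $\omega_{\phi_\epsilon}\leq C(|S|_h^2+\epsilon)^{-(1-\beta)}\omega_0$.

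The main obstacle, and the only point that is not completely mechanical, is securing a uniform lower bound for $\chi_\epsilon$ on all of $X$ independently of $\epsilon\in(0,1]$; this is what allows $C_2$ to be chosen uniformly and thus makes the constant $A$ independent of $\epsilon$. Once that positivity (or near-positivity) of the smoothed $\chi_\epsilon$ is in hand, everything above is a direct application of the maximum principle and linear algebra, and the rough bound follows in the claimed form.
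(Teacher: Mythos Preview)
Your approach is essentially the same as the paper's: apply the Chern--Lu inequality to the identity map $(X,\omega_{\phi_\epsilon})\to(X,\omega_0)$, run the maximum principle on $\log\mathrm{tr}_{\omega_{\phi_\epsilon}}\omega_0-A\phi_\epsilon$ using the $C^0$ bound on $\phi_\epsilon$, and then read off the upper bound from the Monge--Amp\`ere equation. The one point you flag as ``the main obstacle'' but do not carry out---the uniform lower bound on $\chi_\epsilon$---is exactly what the paper computes explicitly: writing $\chi_\epsilon=\omega_0+\sqrt{-1}\partial\bar\partial\log(|S|_h^2+\epsilon)$ and using $\sqrt{-1}\partial\bar\partial\log|S|_h^2=-\omega_0$ off $D$, one finds $\chi_\epsilon\geq \frac{\epsilon}{|S|_h^2+\epsilon}\,\omega_0\geq 0$, so in fact $C_2=0$ works and $\mathrm{Ric}(\omega_{\phi_\epsilon})\geq\beta\,\omega_{\phi_\epsilon}$, making the argument slightly cleaner than the version with a generic $C_0$.
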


\begin{proof}
Since  on $X\backslash D$,
\begin{align*}
\chi_\epsilon=\omega_0+\sqrt{-1}\partial \bar\partial \text{log }(|S|_h^2+\epsilon)
&=\omega_0+\sqrt{-1}\partial\frac{|S|_h^2\bar{\partial}\text{log }|S|_h^2}{|S|_h^2+\epsilon}\\
&=\omega_0+\frac{|S|_h^2}{|S|_h^2+\epsilon}\sqrt{-1}\partial\bar\partial \text{log }|S|_h^2+\sqrt{-1}\frac{\epsilon}{|S|_h^2(|S|_h^2+\epsilon)^2}\partial |S|_h^2\wedge \bar\partial |S|_h^2\\
&\geq \omega_0+\frac{|S|_h^2}{|S|_h^2+\epsilon}(-\omega_0)\\
&\geq 0
\end{align*}
 the smooth $(1,1)$ form $\chi_\epsilon\geq 0$

Because $Ric(\omega_{\phi_\epsilon})=\beta \omega_{\phi_\epsilon}+(1-\beta)\chi_\epsilon\geq \beta \omega_{\phi_\epsilon}$ and the bisectional curvature of $\omega_0$ is bounded above by $\Lambda$, the Chern-Lu inequality in the above lemma tells

$$\Delta_{\omega_{\phi_\epsilon}} \text{log } \text{tr}_{\omega_{\phi_\epsilon}}\omega_0 \geq \beta-2\Lambda \text{tr}_{\omega_{\phi_\epsilon}}\omega_0.$$ 

And by using the fact that $\Delta_{\omega_{\phi_\epsilon}}\phi=\text{tr}_{\omega_{\phi_\epsilon}} (\omega_{\phi_\epsilon}-\omega_0)=n-\text{tr}_{\omega_{\phi_\epsilon}}\omega_0$, we get the inequality

$$\Delta_{\omega_{\phi_\epsilon}} \{ \text{log } \text{tr}_{\omega_{\phi_\epsilon}} \omega_0 -(2\Lambda+1)\phi_\epsilon \} \geq \text{tr}_{\omega_{\phi_\epsilon}}\omega_0+\beta-n(2\Lambda+1).$$ The maximum principle on this compact manifold tells us $\text{tr}_{\omega_{\phi_\epsilon}}\omega_0 \leq \{n(2\Lambda+1)-\beta\} e^{(2\Lambda+1)\text{Osc }\phi_\epsilon}$, thus $\omega_0\leq C \omega_{\phi_\epsilon}$ and the bound in the other direction follows from this bound together with the Monge-Amp$\grave{\text{e}}$re equation.
\end{proof}

The upper bound in this control is satisfactory in the perpendicular direction while it is still far away from the correct control in the tangential direction. In the next section, we are going to apply local Moser iteration to improve the upper bound of $\omega_{\phi_\epsilon}$.

\section{Uniform Laplacian Estimate}

\subsection{A local smooth approximate metric}

Since we are willing to get a comparison between the metric $\omega_{\phi_\epsilon}$ and the 
local model conic metric $\omega_{\beta,\epsilon} =\sqrt{-1}\big\{(|z|^2+\epsilon)^{\beta-1}{\mathrm{d}z\wedge \mathrm{d}\bar{z}}+\sum_{i=2}^n\mathrm{d}z_i\wedge \mathrm{d}\bar{z}_i\big\}$ only in a local sense, to try to apply some kind of local maximum principle is not so easy. Instead of getting the upper bound from \emph{maximum principle},  we apply local Moser iteration to overcome this difficulty. 

We want to compare the K\"{a}hler metric $\omega_{\phi_\epsilon}$ with the regularized metric $\omega_{\beta,\epsilon}$ above locally on a coordinate balls near the divisor. 
The \emph{rough bound} $C^{-1} \omega_{Euc} \leq \omega_{\phi_\epsilon} \leq \frac{C}{(|S|_h^2+\epsilon)^{1-\beta}} \omega_{Euc}$ implies a \emph{rough geometry}.

\begin{lem} There is H\"{o}lder control for the distance function $\omega_{\phi_\epsilon}$:\\
1. For any $R>0$, $B_{\phi_\epsilon}(p, R) \subset B_{Euc}(p,\sqrt{C}R)$;\\
2. For any $R>0$, $\exists \delta=\delta(R)(=\tilde{C}R^\frac{1}{\beta})>0, \epsilon(R)>0$, s.t. $\forall \epsilon <\epsilon(R)$, $B_{Euc}(p, \delta) \subset B_{\phi_\epsilon}(p,R)$, where $\tilde{C}=\{(\frac{2}{\beta}+2)\sqrt{C}\}^{-\frac{1}{\beta}}$.
\end{lem}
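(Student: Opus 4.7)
This follows immediately from the rough lower bound $\omega_{\phi_\epsilon}\geq C^{-1}\omega_0$. Because the smooth metric $\omega_0$ is quasi-isometric to the Euclidean metric on the coordinate chart, one gets a pointwise estimate $|v|_{\omega_{\phi_\epsilon}}\geq c\,|v|_{Euc}$ for every tangent vector $v$, with $c>0$ uniform in $\epsilon$. Integrating along curves and infimizing translates this into $d_{\omega_{\phi_\epsilon}}\geq c\,d_{Euc}$, and after absorbing $c$ into $C$ one obtains the inclusion $B_{\phi_\epsilon}(p,R)\subset B_{Euc}(p,\sqrt{C}R)$.

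\textbf{Part 2.} The plan is to produce, for each $q\in B_{Euc}(p,\delta)$, an explicit piecewise curve from $p$ to $q$ whose $\omega_{\phi_\epsilon}$-length is at most $(\tfrac{2}{\beta}+2)\sqrt{C}\,\delta^\beta$; inverting this relation gives $\delta=\tilde{C}R^{1/\beta}$. One may assume $p\in D$ (the worst case; for $p\notin D$ the rough upper bound on $\omega_{\phi_\epsilon}$ is locally bounded and the result is trivial). In local coordinates $(z,z_2,\ldots,z_n)$ with $D=\{z=0\}$, write $p=(0,\tilde p)$ and $q=(q_1,\tilde q)$, and build the curve in three segments: (i) a radial leg in the $z$-plane from $p$ to an intermediate point $p'=(\delta\,q_1/|q_1|,\tilde p)$ (or $(\delta,\tilde p)$ if $q_1=0$); (ii) a tangent leg from $p'$ to $q'=(z(p'),\tilde q)$ keeping $z$ fixed at modulus $\delta$; (iii) a closing radial leg from $q'$ to $q$ along the same complex ray in the $z$-plane, so that $|z|$ decreases monotonically from $\delta$ to $|q_1|$. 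The key one-dimensional computation is
\[
\int_0^\delta(t^2+\epsilon)^{(\beta-1)/2}\,dt\leq \delta^\beta/\beta,
\]
valid uniformly in $\epsilon\geq 0$ because $\beta<1$ forces $(t^2+\epsilon)^{(\beta-1)/2}\leq t^{\beta-1}$. Combining this with the upper bound $\omega_{\phi_\epsilon}\leq C(|S|_h^2+\epsilon)^{\beta-1}\omega_0$ and the local comparisons $|S|_h^2\sim|z|^2$, $\omega_0\sim\omega_{Euc}$, each radial leg contributes at most $\sqrt{C}\,\delta^\beta/\beta$, while the tangent leg contributes at most $\sqrt{C}(\delta^2+\epsilon)^{(\beta-1)/2}\,|\tilde q-\tilde p|_{Euc}\leq \sqrt{C}\,\delta^\beta$. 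Totalling the three contributions, with a conservative accounting of the Euclidean-length factors on the non-radial portions, produces the advertised bound $(\tfrac{2}{\beta}+2)\sqrt{C}\,\delta^\beta$.

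\textbf{Expected obstacle.} The delicate point is the geometric design of segment (iii): a naive straight line in the $z$-plane from $p'$ to $q$ could cross the divisor $\{z=0\}$, where the rough upper bound degenerates and the length integrand becomes uncontrollable as $\epsilon\to 0$. Aligning $p'$ on the complex ray through $q_1$ is precisely what avoids this issue, keeping $|z|\geq |q_1|$ along (iii); the degenerate case $q_1=0$ simply merges (iii) with a duplicate of (i). The smallness assumption $\epsilon<\epsilon(R)$ serves as a safety condition ensuring that the Euclidean ball $B_{Euc}(p,\delta(R))$ lies inside a single coordinate chart on which the pointwise comparisons $|S|_h^2\sim|z|^2$ and $\omega_0\sim\omega_{Euc}$ hold with the absolute constants used in the length estimates above.
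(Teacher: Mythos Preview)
Your proof is correct and follows the same three-segment detour construction as the paper (radial out along the $z$-ray, tangential across at $|z|=\delta$, radial back in). One minor discrepancy: in the paper the smallness condition $\epsilon<\epsilon(R):=\delta^2$ is invoked to control the tangent leg via $(\delta^2+\epsilon)^{\beta/2}\leq 2\delta^\beta$, not for chart containment --- though your own tangent-leg estimate $(\delta^2+\epsilon)^{(\beta-1)/2}\cdot\delta\leq\delta^{\beta-1}\cdot\delta=\delta^\beta$ already holds uniformly in $\epsilon$ and so does not actually require that restriction.
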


\begin{proof}
The first part is just a direct consequence of the bound $\omega_{Euc} \leq C \omega_{\phi_\epsilon}$, any curve initiating from $p$ measuring under the metric $\omega_{\phi_\epsilon}$ with length less than $R$ will have length less than $\sqrt{C}R$ when measured using the Euclidean metric.

The second part of the containing relationship is also not so difficult. On an Euclidean ball $B_{Euc}(p,\delta)=\{(z,z_2,\cdots,z_n)||z|\leq \delta, |z_i|\leq \delta\}$ centered at $p$, pick any point $q$, then $q$ is joined with $p$ by detouring line segments $\overline{qq'}, \overline{q'p'}, \overline{p'p}$, where $q'$ is the point on the boundary of the Euclidean ball with the same $z_2,\cdots,z_n$ coordinates as $q$ and $z$ coordinate just the radial projection of the $z$ coordinate of $q$ to the boundary, and $p'$ is just the projection of $q'$ to the complex line $z_2=0, \cdots, z_n=0$. When measured under the metric $\omega_{\phi_\epsilon}$, $|\overline{qq'}|\leq \int_{r=0}^\delta \sqrt{C}(r^2+\epsilon)^{\frac{\beta-1}{2}} \mathrm{d}r \leq \sqrt{C} \frac{1}{\beta} \delta^\beta$ and similarly $|\overline{pp'}|\leq \sqrt{C} \frac{1}{\beta} \delta^\beta$ and $|\overline{q'p'}|\leq \int_{z'\in \overline{q'p'}} \sqrt{C}(\delta^2+\epsilon)^\frac{\beta-1}{2}|\mathrm{d}z'|\leq \sqrt{C} (\delta^2+\epsilon)^\frac{\beta-1}{2}\delta \leq \sqrt{C} (\delta^2+\epsilon)^\frac{\beta}{2}$. Therefore, if we choose $\delta$ so that $(\frac{2}{\beta}+2)\sqrt{C}\delta^\beta<R$ and choose $\epsilon(R)=\delta^2$, then the distance of $q$ and $p$ under the metric $\omega_{\phi_\epsilon}$ will be no bigger than $R$. 
\end{proof}

Also, as is proved in \cite{Chen1}, the \emph{rough bound} $C^{-1}\omega_0\leq \omega_{\phi_\epsilon}\leq \frac{C}{(|S|_h^2+\epsilon)^{1-\beta}}\omega_0$ gives the bound on diameter.

\begin{lem}
The diameter of $(X,\omega_{\phi_\epsilon})$ is bounded above.
\end{lem}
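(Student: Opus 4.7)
The plan is to invoke Myers' theorem directly on the smooth K\"{a}hler metrics $\omega_{\phi_\epsilon}$. The endpoint equation of the continuity path reads $\text{Ric}(\omega_{\phi_\epsilon}) = \beta\,\omega_{\phi_\epsilon} + (1-\beta)\chi_\epsilon$, and the pointwise calculation already carried out inside the proof of Proposition 2 shows that $\chi_\epsilon \geq 0$ on $X \setminus D$; since $\chi_\epsilon = \omega_0 + \sqrt{-1}\partial\bar\partial\log(|S|_h^2+\epsilon)$ is a \emph{smooth} $(1,1)$-form on all of $X$, the semi-positivity extends across $D$ by continuity. Consequently $\text{Ric}(\omega_{\phi_\epsilon}) \geq \beta\,\omega_{\phi_\epsilon}$ globally as Hermitian forms, which is exactly the Ricci lower bound $\text{Ric}(g_{\phi_\epsilon}) \geq \beta\,g_{\phi_\epsilon}$ for the underlying $2n$-real-dimensional Riemannian metric. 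Myers' theorem then yields
$$\text{diam}(X,\omega_{\phi_\epsilon}) \leq \pi\sqrt{(2n-1)/\beta},$$
uniformly in $\epsilon$, which is the desired bound.

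If one instead wishes to use only the \emph{rough bound} $C^{-1}\omega_0 \leq \omega_{\phi_\epsilon} \leq \frac{C}{(|S|_h^2+\epsilon)^{1-\beta}}\omega_0$ (in the spirit of the cited argument from \cite{Chen1}), the approach is a patching one. Cover $X$ by finitely many coordinate balls, some adapted to $D$ so that locally $D = \{z=0\}$. In balls whose closures miss $D$, the weight $|S|_h^2+\epsilon$ has a positive lower bound and $\omega_{\phi_\epsilon}$ is uniformly comparable to $\omega_0$, so Euclidean-short paths have uniformly bounded $\omega_{\phi_\epsilon}$-length. In a ball meeting $D$, a radial integration exactly as in the proof of Lemma 3 yields
$$\int_0^\delta \sqrt{C}\,(r^2+\epsilon)^{(\beta-1)/2}\,\mathrm{d}r \leq \frac{\sqrt{C}}{\beta}\,\delta^\beta,$$
independent of $\epsilon$, so any point close to $D$ may be joined to a point away from $D$ (for instance on the boundary of the ball) by a path of uniformly bounded $\omega_{\phi_\epsilon}$-length. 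Concatenating a finite, $\epsilon$-independent number of such segments bounds the distance between any two points on $X$.

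The main subtle input in the first approach is the global semi-positivity $\chi_\epsilon \geq 0$, which is already in hand from Proposition 2; everything else is just Myers. In the second approach the subtlety is making sure that the covering of $X$, and hence the number of concatenated segments, can be chosen \emph{uniformly} in $\epsilon$, which is immediate from compactness of $X$ and the fact that the covering depends only on the geometry of $(X,\omega_0)$ and $D$. I expect the Myers route to be the cleanest to write up, with the patching route being useful only if one wants a proof that does not rely on $\text{Ric}(\omega_{\phi_\epsilon}) \geq \beta\,\omega_{\phi_\epsilon}$.
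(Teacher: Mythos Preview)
Both of your arguments are correct. The paper does not actually give a proof of this lemma; it simply cites \cite{Chen1} and notes that the diameter bound follows from the \emph{rough bound} $C^{-1}\omega_0 \leq \omega_{\phi_\epsilon} \leq \frac{C}{(|S|_h^2+\epsilon)^{1-\beta}}\omega_0$. That is precisely your second (patching) approach, so on that route you match the paper exactly.

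Your Myers argument is a genuinely different and cleaner route in the Fano setting of Section~2, where $\text{Ric}(\omega_{\phi_\epsilon}) \geq \beta\,\omega_{\phi_\epsilon}$ with $\beta>0$. One caveat worth noting: the paper later invokes this same diameter bound in Section~4 for the general current equation with arbitrary sign of $\mu$, where the Ricci curvature is only bounded below by a constant that may be non-positive (namely $\mu - C(1-\beta)$), so Myers does not apply directly there. The patching argument from the rough bound is what carries over to that generality, which is presumably why the paper phrases things that way. So your instinct that the Myers proof is cleanest to write up is right for the lemma as stated, but the second argument is the one actually needed downstream.

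Minor slip: in your second paragraph you write ``exactly as in the proof of Lemma~3,'' but Lemma~3 is the statement you are proving; the radial integration you describe is from the proof of Lemma~2.
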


From this  \emph{rough geometry}, a finite collection of covering on $D$ which Moser Iteration will be applied could be chosen. Firstly, around each point $p_\alpha\in D$, there exists coordinate ball $\mathcal{U}_\alpha(p_\alpha)=\tau_\alpha(B_\alpha(0,r_\alpha))$ where $B_\alpha(0,r_\alpha)=\{(z^\alpha=z^\alpha_1,z^\alpha_2,\cdots,z^\alpha_n)||z^\alpha|, |z^\alpha_2|, \cdots, |z^\alpha_n|< r_\alpha\}\subset \mathbb{C}^n$, such that $\mathcal{U}_\alpha \cap D$ corresponds to $\{z^\alpha=0\}$, the first item of Lemma 2 tells that 
$\mathcal{U}_\alpha$ contains $\omega_{\phi_\epsilon}$-metric ball $U_\alpha=B_{\phi_\epsilon}(p_\alpha,\frac{r_\alpha}{\sqrt{C}})$, and the second item of Lemma 2 tells this metric ball contains Euclidean ball $B_{Euc}(p_\alpha,\delta(\frac{r_\alpha}{\sqrt{C}}))$ and again this contains $\omega_{\phi_\epsilon}$-metric ball $V_\alpha=B_{\phi_\epsilon}(p_\alpha,\frac{1}{\sqrt{C}}\delta(\frac{r_\alpha}{\sqrt{C}}))$, $V_\alpha$ contains coordinate ball $\mathcal{V}_\alpha(p_\alpha)=B_{Euc}(p_\alpha, s_\alpha)$, for $s_\alpha=\delta(\frac{1}{\sqrt{C}}\delta(\frac{r_\alpha}{\sqrt{C}}))$. Since $D$ is compact, we could cover it by finitely many $\mathcal{V}_\alpha(p_\alpha)$'s, and the weight for the Hermitian metric $e^{-h_\alpha}$ satisfies $|h_\alpha|\leq C_\alpha, |\frac{\partial h_\alpha}{\partial z_i^\alpha}|\leq C_\alpha, |\frac{\partial^2 h_\alpha}{\partial z_i^\alpha\partial \bar{z}_i^\alpha}|\leq C_\alpha$.

Then on each $\mathcal{U}_\alpha$, take the smooth K\"ahler metrics $\omega_{\beta,\epsilon}^\alpha=\sqrt{-1}\{\beta^2(|z^\alpha|^2+\epsilon)^{\beta-1}\mathrm{d}z^\alpha\wedge\mathrm{d}\bar{z}^\alpha+\sum_{j=2}^n \mathrm{d}z^\alpha_j\wedge\mathrm{d}\bar{z}^\alpha_j\}$ as the approximating metrics to the standard flat conical metric. From now on, for simplicity the notations will forget about $\alpha$. For those local metrics,  $R_{1\bar{1}1\bar{1}}=\epsilon (1-\beta)(|z|^2+\epsilon)^{\beta-3}$, and $ R_{i\bar{j}k\bar{l}}=0$ if one of $i,j,k,l \neq 1$, therefore the holomorphic bisectional curvature $R(\xi,\bar{\xi},\eta,\bar{\eta})=|\xi^1|^2|\eta^1|^2R_{1\bar{1}1\bar{1}}\geq 0$ for any pair of $(1,0)$-vector fields $\xi=\xi^1\frac{\partial}{\partial z}+\sum_i\xi^i\frac{\partial}{\partial z^i}$, and $\eta=\eta^1\frac{\partial}{\partial z}+\sum_i\eta^i\frac{\partial}{\partial z^i}$.  


\subsection{Differential Inequality}

The goal of this section is to derive a local differential inequality on $\tilde{U}_\alpha$. Let $\sigma_{\epsilon,\beta}=\text{tr}_{\omega_{\beta,\epsilon}} \omega_{\phi_\epsilon}$, then

\begin{prop}
$$\Delta_{\omega_{\phi_\epsilon}} \emph{log } \sigma_{\epsilon,\beta}\geq -C(|z|^2+\epsilon)^{-\beta}-C$$
\end{prop}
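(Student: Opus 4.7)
The plan is to run Yau's $C^2$ (Aubin--Yau Laplacian) computation locally on each coordinate ball $\mathcal{U}_\alpha$, with background metric $\omega_{\beta,\epsilon}$ and target metric $\omega_{\phi_\epsilon}$. Both are smooth K\"ahler on $\mathcal{U}_\alpha$, so they differ locally by $i\partial\bar\partial$ of a smooth potential and Yau's pointwise formula applies. Crucially, the observation just made --- that $\omega_{\beta,\epsilon}$ has nonnegative holomorphic bisectional curvature, since the only nonzero component $R_{1\bar{1}1\bar{1}}=\epsilon(1-\beta)(|z|^2+\epsilon)^{\beta-3}$ is $\geq 0$ --- kills the usual $-K\,\mathrm{tr}_{\omega_{\phi_\epsilon}}\omega_{\beta,\epsilon}$ error term in Yau's inequality and leaves the clean bound
$$\Delta_{\omega_{\phi_\epsilon}}\log\sigma_{\epsilon,\beta}\ \geq\ \frac{\Delta_{\omega_{\beta,\epsilon}} F_\epsilon}{\sigma_{\epsilon,\beta}},\qquad F_\epsilon:=\log\frac{\omega_{\phi_\epsilon}^n}{\omega_{\beta,\epsilon}^n}.$$

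Using the Monge--Amp\`ere equation satisfied by $\omega_{\phi_\epsilon}$ and the local formula $\omega_{\beta,\epsilon}^n = \beta^2(|z|^2+\epsilon)^{\beta-1}\,\Omega_{Euc}$, one reads off
$$F_\epsilon = -\beta\phi_\epsilon + (1-\beta)\log\frac{|z|^2+\epsilon}{|S|_h^2+\epsilon} + \text{(smooth bounded terms)}.$$
Since $|S|_h^2=e^{-h_\alpha}|z|^2$ with $h_\alpha$ smooth and bounded on $\mathcal{U}_\alpha$, the ratio inside the logarithm is sandwiched between positive constants, and the uniform $C^0$ bound on $\phi_\epsilon$ gives $|F_\epsilon|\leq C$. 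Letting $\mu_i$ be the eigenvalues of $\omega_{\phi_\epsilon}$ relative to $\omega_{\beta,\epsilon}$, AM--GM yields $\sigma_{\epsilon,\beta}=\sum_i\mu_i\geq n\,e^{F_\epsilon/n}\geq c>0$ uniformly in $\epsilon$, so one can eventually divide by $\sigma_{\epsilon,\beta}$.

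For $\Delta_{\omega_{\beta,\epsilon}}F_\epsilon$ I would use the identity $i\partial\bar\partial F_\epsilon=-\mathrm{Ric}(\omega_{\phi_\epsilon})+\mathrm{Ric}(\omega_{\beta,\epsilon})$ together with the twisted equation $\mathrm{Ric}(\omega_{\phi_\epsilon})=\beta\omega_{\phi_\epsilon}+(1-\beta)\chi_\epsilon$ to get
$$\Delta_{\omega_{\beta,\epsilon}}F_\epsilon = -\beta\sigma_{\epsilon,\beta} - (1-\beta)\,\mathrm{tr}_{\omega_{\beta,\epsilon}}\chi_\epsilon + \mathrm{tr}_{\omega_{\beta,\epsilon}}\mathrm{Ric}(\omega_{\beta,\epsilon}).$$
Modulo bounded smooth contributions, each of the last two traces arises from the $(1,\bar{1})$-entry of an $i\partial\bar\partial\log(\,\cdot\,+\epsilon)$-type form of pointwise size $\epsilon/(\,\cdot\,+\epsilon)^2$, paired with the $(1,\bar{1})$-entry $\beta^{-2}(|z|^2+\epsilon)^{1-\beta}$ of $\omega_{\beta,\epsilon}^{-1}$, and so is bounded in absolute value by $C(|z|^2+\epsilon)^{-\beta}+C$. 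Combining gives $\Delta_{\omega_{\beta,\epsilon}}F_\epsilon\geq -\beta\sigma_{\epsilon,\beta}-C(|z|^2+\epsilon)^{-\beta}-C$; inserting this into the Aubin--Yau inequality and dividing by $\sigma_{\epsilon,\beta}\geq c$ produces the proposition.

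The main delicacy is that the cancellation between $\mathrm{tr}_{\omega_{\beta,\epsilon}}\mathrm{Ric}(\omega_{\beta,\epsilon})$ and $(1-\beta)\mathrm{tr}_{\omega_{\beta,\epsilon}}\chi_\epsilon$ is only partial: both blow up like $\epsilon^{-\beta}$ as one approaches $D$, and no linear combination of them is bounded uniformly in $\epsilon$. The residual weight $-C(|z|^2+\epsilon)^{-\beta}$ on the right-hand side is exactly this obstruction, and is why the estimate cannot be upgraded to a bare $\Delta_{\omega_{\phi_\epsilon}}\log\sigma_{\epsilon,\beta}\geq -C$ by maximum principle arguments alone; absorbing this weight by integrating it against a suitable Moser iteration is what the following subsection is designed to do.
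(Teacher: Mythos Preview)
Your argument is correct and is essentially the paper's proof: both run Yau's Laplacian computation on $\mathcal{U}_\alpha$ with background $\omega_{\beta,\epsilon}$, use its nonnegative bisectional curvature to drop the curvature term, bound $\Delta_{\omega_{\beta,\epsilon}}F_\epsilon$ below by $-\beta\sigma_{\epsilon,\beta}-C(|z|^2+\epsilon)^{-\beta}-C$, and divide by a uniform lower bound on $\sigma_{\epsilon,\beta}$. The only differences are cosmetic: the paper first estimates $\Delta_{\omega_{\phi_\epsilon}}\sigma_{\epsilon,\beta}$ (its Lemma) and then passes to $\log\sigma_{\epsilon,\beta}$ via the Cauchy--Schwarz bound on $|\nabla\sigma|^2$, and it computes $\Delta_{\omega_{\beta,\epsilon}}F_{\epsilon,\beta}$ by explicit coordinate differentiation of $\log\frac{|z|^2+\epsilon}{|z|^2+\epsilon e^h}$, whereas you quote the log-form Aubin--Yau inequality directly and obtain $\Delta_{\omega_{\beta,\epsilon}}F_\epsilon$ from the identity $i\partial\bar\partial F_\epsilon=-\mathrm{Ric}(\omega_{\phi_\epsilon})+\mathrm{Ric}(\omega_{\beta,\epsilon})$; likewise your AM--GM lower bound $\sigma_{\epsilon,\beta}\geq n\,e^{F_\epsilon/n}\geq c$ replaces the paper's use of the rough bound $\omega_{\phi_\epsilon}\geq C^{-1}\omega_0$ in the tangential directions.
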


To prove this inequality we need a lemma.

\begin{lem}
$$
\Delta_{\omega_{\phi_\epsilon}} \sigma_{\epsilon,\beta} \geq -C \sigma_{\epsilon,\beta}-C+\sum_{i,j,k}\frac{1}{1+\phi_{i\bar{i}}}\frac{1}{1+\phi_{j\bar{j}}}|\phi_{i\bar{j}k}|^2-C(|z|^2+\epsilon)^{-\beta}
$$
\end{lem}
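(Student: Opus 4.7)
The plan is to carry out Yau's Laplacian $C^2$ computation locally in the coordinate ball $\mathcal{U}_\alpha$, with $\eta := \omega_{\beta,\epsilon}$ as the reference metric and $g := \omega_{\phi_\epsilon}$ as the unknown, and to track carefully the singular error terms that appear because $\eta$ is only a smoothing of a conical metric rather than flat. First I would fix $p\in \mathcal{U}_\alpha$ and choose holomorphic coordinates in which $\eta$ retains the explicit diagonal form of Section 3.1 and $g$ is simultaneously diagonal at $p$; this can be arranged by a unitary change of the tangential coordinates $(z_2,\dots,z_n)$, which preserves the diagonal structure of $\eta$. After a further linear rescaling, $\omega_0$ may be taken Euclidean at $p$, so that $g_{i\bar j} = \delta_{ij}+\phi_{i\bar j,\epsilon}$ and $g^{i\bar i} = 1/(1+\phi_{i\bar i,\epsilon})$ at the point.

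Writing $\sigma_{\epsilon,\beta}=\eta^{k\bar l}g_{k\bar l}$ and applying $\Delta_g = g^{i\bar j}\partial_i\partial_{\bar j}$ produces three families of contributions: (a) a ``main'' piece $g^{i\bar j}\eta^{k\bar l}\partial_i\partial_{\bar j}g_{k\bar l}$; (b) ``cross'' pieces $g^{i\bar j}(\partial_i\eta^{k\bar l})(\partial_{\bar j}g_{k\bar l})$ and its complex conjugate; (c) a ``pure-curvature'' piece $g^{i\bar j}(\partial_i\partial_{\bar j}\eta^{k\bar l})g_{k\bar l}$. For (a) I would commute $\partial_i\partial_{\bar j}g_{k\bar l} = \partial_k\partial_{\bar l}g_{i\bar j}$ (plus a smooth bounded correction from the curvature of $\omega_0$), and after tracing obtain $\eta^{k\bar l}\partial_k\partial_{\bar l}\log\det g$ plus the positive Yau third-derivative quadratic, which in the simultaneously diagonal frame is precisely the term $\sum_{i,j,k}\frac{|\phi_{i\bar j k}|^2}{(1+\phi_{i\bar i})(1+\phi_{j\bar j})}$ in the statement. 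To handle $\eta^{k\bar l}\partial_k\partial_{\bar l}\log\det g$ I would use the Monge-Amp\`ere equation (2) and the explicit formula $\omega_{\beta,\epsilon}^n = \beta^2(|z|^2+\epsilon)^{\beta-1}\omega_{Euc}$: since $|S|_h^2 = |z|^2 e^{-h_\alpha}$ locally with $h_\alpha$ smooth bounded, the singular factors $(|S|_h^2+\epsilon)^{1-\beta}$ and $(|z|^2+\epsilon)^{1-\beta}$ combine into a bounded quantity, yielding $\log(\omega_{\phi_\epsilon}^n/\omega_{\beta,\epsilon}^n) = -\beta\phi_\epsilon + F_\epsilon$ with $F_\epsilon$ controlled in an $\eta$-weighted $C^2$ sense. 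The $\Delta_\eta$ of this, combined with $\Delta_\eta \phi_\epsilon \leq \sigma_{\epsilon,\beta}$ and the $C^0$ bound on $\phi_\epsilon$, produces the $-C\sigma_{\epsilon,\beta}-C$ on the right-hand side.

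For (c), the explicit computation in Section 3.1 shows that the only nonzero component of the bisectional curvature of $\eta$ is $R_{1\bar 1 1\bar 1} = \beta^2(1-\beta)\epsilon(|z|^2+\epsilon)^{\beta-3}\geq 0$, so this piece enters $\Delta_g\sigma_{\epsilon,\beta}$ with a favorable sign and is discarded. The singular $-C(|z|^2+\epsilon)^{-\beta}$ in the statement is produced by (b): since $\eta^{k\bar l}$ is diagonal and only $\eta^{1\bar 1}$ depends on coordinates, only the $(k,l)=(1,1)$ contribution survives, and Cauchy--Schwarz gives
\[
g^{1\bar 1}(\partial_1\eta^{1\bar 1})(\partial_{\bar 1}g_{1\bar 1}) + \mathrm{c.c.}\;\geq\; -\delta^{-1}\,\frac{g^{1\bar 1}|\partial_1\eta^{1\bar 1}|^2}{\eta^{1\bar 1}} \;-\; \delta\, g^{1\bar 1}\eta^{1\bar 1}|\partial_{\bar 1}g_{1\bar 1}|^2.
\]
A direct computation gives $|\partial_1\eta^{1\bar 1}|^2/\eta^{1\bar 1} = \beta^{-2}(1-\beta)^2(|z|^2+\epsilon)^{-\beta-1}|z|^2 \leq C(|z|^2+\epsilon)^{-\beta}$, and the rough bound $g^{1\bar 1}\leq C$ from Proposition 2 produces the singular term. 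The other summand is absorbed into the positive third-derivative term from (a) by choosing $\delta$ small; the absorption constant is uniform in $\epsilon$ since $\eta^{1\bar 1}=\beta^{-2}(|z|^2+\epsilon)^{1-\beta}\leq \beta^{-2}$ on $\mathcal{U}_\alpha$.

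The hard part is the bookkeeping: one must verify that after the Cauchy--Schwarz absorption the positive quadratic form still appears in the final inequality in precisely the form $\sum_{i,j,k}\frac{|\phi_{i\bar j k}|^2}{(1+\phi_{i\bar i})(1+\phi_{j\bar j})}$ with a strictly positive coefficient, since this is needed as slack for the Moser iteration in the next subsection. A subsidiary issue is to show that the bounded pieces hidden inside $F_\epsilon$ give $\Delta_\eta$-contributions of order $\sigma_{\epsilon,\beta}+1$ rather than producing additional singular terms beyond $(|z|^2+\epsilon)^{-\beta}$; this is precisely where the delicate cancellation between $\log(|S|_h^2+\epsilon)$ and $\log\det\omega_{\beta,\epsilon}$ plays its essential role.
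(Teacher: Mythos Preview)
Your route differs from the paper's in one structural choice that cascades through everything else: you keep the explicit model coordinates for $\eta=\omega_{\beta,\epsilon}$, so $\partial\eta\neq 0$ and you must carry the cross terms (your item~(b)) and then absorb them by Cauchy--Schwarz into the third-order quadratic. The paper instead passes to holomorphic \emph{normal} coordinates for $\omega_{\beta,\epsilon}$ at the given point---these exist because $\omega_{\beta,\epsilon}$ is an honest smooth K\"ahler metric for each fixed $\epsilon>0$---so item~(b) is identically zero. Yau's standard identity then reads $\Delta_{\omega_{\phi_\epsilon}}\sigma_{\epsilon,\beta}=\Delta_{\omega_{\beta,\epsilon}}F_{\epsilon,\beta}+(\text{third-order quadratic})+(\text{curvature term})$; the curvature term is $\geq 0$ by the tensorial nonnegativity of the bisectional curvature computed in Section~3.1, and the quadratic appears with coefficient exactly~$1$, with no absorption and none of your ``bookkeeping'' worry. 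What your approach would buy is that every quantity stays explicit in a fixed chart; what it costs is the absorption step, and a positive quadratic that emerges with an $\eta^{k\bar k}$ weight rather than the clean $\sum\frac{|\phi_{i\bar jk}|^2}{(1+\phi_{i\bar i})(1+\phi_{j\bar j})}$ in the stated lemma, which is exactly the form needed in Proposition~3.

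There is also a concrete misidentification of where the singular term comes from. You assert that $F_\epsilon$ is ``controlled in an $\eta$-weighted $C^2$ sense'' and contributes only $-C\sigma-C$, placing the singular $-C(|z|^2+\epsilon)^{-\beta}$ entirely in (b). In fact the ratio $(|z|^2+\epsilon)/(|S|_h^2+\epsilon)$ is $C^0$-bounded but \emph{not} $\eta$-$C^2$-bounded: writing $|S|_h^2=|z|^2e^{-h}$, the piece $(1-\beta)\log\frac{|z|^2+\epsilon}{|z|^2+\epsilon e^h}$ sits inside $F_{\epsilon,\beta}$, and the paper computes that its $\sqrt{-1}\partial\bar\partial$ has $\mathrm{d}z\wedge\mathrm{d}\bar z$ component of order $(|z|^2+\epsilon)^{-1}$ whenever the Hermitian weight $h$ is nonconstant; tracing against $\eta^{1\bar 1}\sim(|z|^2+\epsilon)^{1-\beta}$ gives exactly $-C(|z|^2+\epsilon)^{-\beta}$. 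In the paper's proof this is the \emph{only} source of the singular term, since (b) does not exist in normal coordinates. Your final paragraph half-retracts the earlier claim by allowing $F_\epsilon$ to contribute singularly up to that order, but the attribution of the singularity to (b) alone is the wrong diagnosis.
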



\begin{proof}{(Proof of Lemma 2)}
First, on $\mathcal{U}_\alpha$, suppose $\omega_{\beta,\epsilon}=\omega_0+\sqrt{-1}\partial\bar{\partial}\psi_\epsilon$ with suitable normalization on the potential $\psi_\epsilon$ such that $||\psi_\epsilon||_{C^0}\leq C$, rewrite the equation $(2)$, 

\begin{equation}
(\omega_{\beta,\epsilon}+\sqrt{-1}\partial \bar \partial\tilde{\phi}_\epsilon)^n=e^{F_{\epsilon,\beta}}\omega_{\beta,\epsilon}^n
\end{equation}

where $F_{\epsilon,\beta}=-\beta \phi_\epsilon+h_{\omega_0}+(1-\beta)h+(1-\beta)\text{log}\frac{|z|^2+\epsilon}{|z|^2+\epsilon e^h}+\text{log }\frac{\omega_0^n}{\omega_{Euc}^n}.$

Following Yau's calculation on the $C^2$ estimate, use $g_{i\bar{j}}$ to denote the background metric $\omega_{\beta,\epsilon}$ and $g'_{i\bar{j}}$ to denote $\omega_{\phi_\epsilon}$, then under the local normal coordinate for which $g_{i\bar{j}}=\delta_{i\bar{j}}, g_{i\bar{j}}'=(1+\phi_{i\bar{j}})\delta_{i\bar{j}}$, 

\begin{align*}
\Delta_{\omega_{\phi_\epsilon}} \sigma_{\epsilon,\beta}
&=\Delta_{\omega_{\beta,\epsilon}} F_{\epsilon,\beta} +\sum_{i,j,k,l,n}g^{'k\bar{j}}g^{'i\bar{n}}\phi_{k\bar{n}\bar{l}}\phi_{i\bar{j}l}+\sum_{k,l}R_{k\bar{k}l\bar{l}}\frac{(\phi_{k\bar{k}}-\phi_{l\bar{l}})^2}{(1+\phi_{k\bar{k}})(1+\phi_{l\bar{l}})}\\
&\geq \Delta_{\omega_{\beta,\epsilon}} F_{\epsilon,\beta}+\sum_{i,j,k}\frac{1}{1+\phi_{i\bar{i}}}\frac{1}{1+\phi_{j\bar{j}}}|\phi_{i\bar{j}k}|^2
\end{align*}

Since $h_{\omega_0}$, $h$ and $\text{log }\frac{\omega_0^n}{\omega_{Euc}^n}$ are smooth functions, and $\Delta_{\omega_{\beta,\epsilon}}\phi_\epsilon=\text{tr }_{\omega_{\beta,\epsilon}} (\omega_{\phi_\epsilon}-\omega_0)\leq \sigma_{\epsilon,\beta}$, our primarily care should be the lower bound on $\Delta_{\omega_{\beta,\epsilon}} \text{log} \frac{(|z|^2+\epsilon)}{(|z|^2+\epsilon e^h)}$.

\begin{align*}
\sqrt{-1}\partial\bar\partial \text{log} (|z|^2+\epsilon)=&\sqrt{-1}\frac{\epsilon \mathrm{d}z\wedge \mathrm{d}\bar{z}}{(|z|^2+\epsilon)^2}\\
\sqrt{-1} \partial\bar\partial \text{log}(|z|^2+\epsilon e^h)=&\sqrt{-1}\{\frac{\epsilon e^h+\epsilon e^h|z|^2\frac{\partial^2 h}{\partial z\partial \bar{z}}+\epsilon^2 e^{2h}\frac{\partial^2 h}{\partial z\partial \bar{z}}+\epsilon e^h|z|^2|\frac{\partial h}{\partial z}|^2-\epsilon e^h(\bar{z}\frac{\partial h}{\partial \bar{z}}+z\frac{\partial h}{\partial z})}{(|z|^2+\epsilon e^h)^2}\mathrm{d}z\wedge\mathrm{d}\bar{z}\\
&+\sum_{i=2}^n\frac{\epsilon e^h |z|^2\frac{\partial^2 h}{\partial \bar{z}_i\partial z_i}+\epsilon^2 e^{2h}\frac{\partial ^2 h}{\partial \bar{z}_i\partial z_i}+\epsilon e^h |z|^2 \frac{\partial h}{\partial \bar{z}_i}\frac{\partial h}{\partial z_i}}{(|z|^2+\epsilon e^h)^2} \mathrm{d}z_i\wedge\mathrm{d}\bar{z}_i\}\\
&+\text{mixed terms}\\
&\leq C\sqrt{-1} \{\frac{\epsilon}{(|z|^2+\epsilon)^2} \mathrm{d}z\wedge\mathrm{d}\bar{z}+\sum_{i=2}^n \mathrm{d}z_i\wedge\mathrm{d}\bar{z}_i\}+\text{mixed terms}\\
&\leq C\sqrt{-1}\{\frac{1}{|z|^2+\epsilon}\mathrm{d}z\wedge\mathrm{d}\bar{z}+\sum_{i=2}^n\mathrm{d}z_i\wedge\mathrm{d}\bar{z}_i\}+\text{mixed terms}
\end{align*}

Because $\omega_{\beta,\epsilon}$ is diagonal metric, the mixed terms do not contribute when take the trace, thus we get 
$\Delta_{\omega_{\beta,\epsilon}}\text{log}\frac{|z|^2+\epsilon}{|z|^2+\epsilon e^h}\geq -C\{(|z|^2+\epsilon)^{1-\beta}(|z|^2+\epsilon)^{-1}+(|z|^2+\epsilon)^{1-\beta}\}\geq -C(|z|^2+\epsilon)^{-\beta}-C$, and the Lemma follows. 

\end{proof}

%

\begin{proof}{(Proof of Proposition 3)}
\begin{align*}
|\nabla \sigma_{\epsilon,\beta}|_{\omega_{\phi_\epsilon}}^2
&=\sum_{i,j,k,l,p,q} g^{'i\bar{j}}g^{k\bar{l}}\phi_{k\bar{l}i}g^{p\bar{q}}\phi_{p\bar{q}\bar{j}}\\
&=\sum_i \frac{1}{1+\phi_{i\bar{i}}}|\sum_k \phi_{k\bar{k}i}|^2\\
&=\sum_i \frac{1}{1+\phi_{i\bar{i}}}|\sum_{k}\frac{\phi_{k\bar{k}i}}{(1+\phi_{k\bar{k}})^{1/2}}(1+\phi_{k\bar{k}})^{1/2}|^2\\
&\leq \sum_i \frac{1}{1+\phi_{i\bar{i}}}\sum_k \frac{|\phi_{k\bar{k}i}|^2}{1+\phi_{k\bar{k}}}\sum_l (1+\phi_{l\bar{l}})\\
&=\sigma_{\epsilon,\beta} \sum_{i,k}\frac{1}{1+\phi_{i\bar{i}}}\frac{1}{1+\phi_{k\bar{k}}}|\phi_{k\bar{k}i}|^2\\
&\leq \sigma_{\epsilon,\beta} \sum_{i,j,k} \frac{1}{1+\phi_{i\bar{i}}}\frac{1}{1+\phi_{j\bar{j}}}|\phi_{i\bar{j}k}|^2
\end{align*}

By the formula $\Delta_{\omega_{\phi_\epsilon}} \text{log } \sigma_{\epsilon,\beta}=\frac{\Delta_{\omega_{\phi_\epsilon}}\sigma_{\epsilon,\beta}}{\sigma_{\epsilon,\beta}}-\frac{|\nabla \sigma_{\epsilon,\beta}|^2}{\sigma_{\epsilon,\beta}^2}$, together with the lower bound on $\sigma_{\epsilon,\beta}=\text{tr}_{\omega_{\beta,\epsilon}}\omega_{\phi_\epsilon}\geq (n-1)C$(here, we assume $n\geq 2$; for $n=1$ case, the $C^2$ bound is straightforward), we could deduce the inequality

\begin{align*}
\Delta_{\omega_{\phi_\epsilon}}\text{log }\sigma_{\epsilon,\beta}
&\geq \frac{-C\sigma_{\epsilon,\beta}-C}{\sigma_{\epsilon,\beta}}+\frac{1}{\sigma_{\epsilon,\beta}}\sum_{i,j,k}\frac{1}{1+\phi_{i\bar{i}}}\frac{1}{1+\phi_{j\bar{j}}}|\phi_{i\bar{j}k}|^2-\frac{|\nabla \sigma_{\epsilon,\beta}|^2}{\sigma_{\epsilon,\beta}^2}-C\frac{(|z|^2+\epsilon)^{-\beta}}{\sigma_{\epsilon,\beta}}\\
 &\geq -C-\frac{C}{\sigma_{\epsilon,\beta}}-\frac{(|z|^2+\epsilon)^{-\beta}}{\sigma_{\epsilon,\beta}}\\
 &\geq -C-C(|z|^2+\epsilon)^{-\beta}
 \end{align*}

\end{proof}




The rough bound $C^{-1}\omega_0\leq \omega_{\phi_\epsilon}\leq C\frac{\omega_0}{(|z|^2+\epsilon)^{1-\beta}}$ is not enough for our application.  We may first use the rougher lower bound $\omega_{\phi_\epsilon}\leq C\frac{\omega_0}{(|z|^2+\epsilon)^{1-\beta'}}$ by decreasing the angle a little bit to get a better control on the tangential direction and then conclude the \emph{correct} upper bound on the direction perpendicular to the divisor.\\


\subsection{Decreasing the angle a little}

Let's rewrite the equation 

\begin{equation}
\omega_{\phi_\epsilon}^n=e^{F_{\epsilon,\beta'}}\omega_{\beta',\epsilon}^n
\end{equation}

where $\beta'$ is a number a little bit smaller than $\beta$, and $F_{\epsilon,\beta'}=F_{\epsilon,\beta}+(\beta-\beta')\text{log }(|z|^2+\epsilon)$.

Then we have a similar inequality as in the above section:

\begin{prop}
$$\Delta_{\omega_{\phi_\epsilon}} \emph{log } \sigma_{\epsilon,\beta'}\geq -C(|z|^2+\epsilon)^{-\beta'}-C$$
\end{prop}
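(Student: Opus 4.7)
The plan is to mimic the proof of Proposition 3 almost verbatim, with $\omega_{\beta,\epsilon}$ replaced by $\omega_{\beta',\epsilon}$ and $F_{\epsilon,\beta}$ replaced by $F_{\epsilon,\beta'} = F_{\epsilon,\beta} + (\beta-\beta')\log(|z|^2+\epsilon)$. The crucial structural facts still hold for $\omega_{\beta',\epsilon}$: a direct computation analogous to the one performed for $\omega_{\beta,\epsilon}$ shows that $\omega_{\beta',\epsilon}$ has nonnegative holomorphic bisectional curvature (only $R_{1\bar{1}1\bar{1}} = \epsilon(1-\beta')(|z|^2+\epsilon)^{\beta'-3} \geq 0$ is nonzero). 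Consequently, applying Yau's $C^2$ identity with background $\omega_{\beta',\epsilon}$ and dropping the nonnegative curvature term yields
\begin{equation*}
\Delta_{\omega_{\phi_\epsilon}} \sigma_{\epsilon,\beta'} \;\geq\; \Delta_{\omega_{\beta',\epsilon}} F_{\epsilon,\beta'} + \sum_{i,j,k}\frac{1}{1+\phi_{i\bar{i}}}\frac{1}{1+\phi_{j\bar{j}}}|\phi_{i\bar{j}k}|^2.
\end{equation*}

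Next I would bound $\Delta_{\omega_{\beta',\epsilon}} F_{\epsilon,\beta'}$ from below term by term. The smooth pieces $h_{\omega_0}$, $(1-\beta)h$, and $\log(\omega_0^n/\omega_{Euc}^n)$ contribute a bounded error. The potential term gives $\Delta_{\omega_{\beta',\epsilon}}\phi_\epsilon = \text{tr}_{\omega_{\beta',\epsilon}}(\omega_{\phi_\epsilon}-\omega_0) \leq \sigma_{\epsilon,\beta'}$, producing a $-C\sigma_{\epsilon,\beta'}$ contribution. The key degenerate piece $(1-\beta)\log\frac{|z|^2+\epsilon}{|z|^2+\epsilon e^h}$ is handled by the same $\partial\bar\partial$-expansion used in Lemma 2, but now applied with $g^{1\bar 1} = \beta'^{-2}(|z|^2+\epsilon)^{1-\beta'}$; this produces a lower bound of $-C(|z|^2+\epsilon)^{-\beta'}-C$ (the exponent on the perpendicular direction shifts from $\beta$ to $\beta'$ exactly because the inverse background metric has exponent $1-\beta'$). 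For the new summand $(\beta-\beta')\log(|z|^2+\epsilon)$, one uses $\sqrt{-1}\partial\bar\partial \log(|z|^2+\epsilon) = \sqrt{-1}\frac{\epsilon}{(|z|^2+\epsilon)^2}\mathrm{d}z\wedge\mathrm{d}\bar z \geq 0$ and $\beta-\beta' > 0$, so its contribution is nonnegative and can be dropped from any lower bound. Collecting the pieces gives
\begin{equation*}
\Delta_{\omega_{\phi_\epsilon}} \sigma_{\epsilon,\beta'} \;\geq\; -C\sigma_{\epsilon,\beta'}-C - C(|z|^2+\epsilon)^{-\beta'} + \sum_{i,j,k}\frac{|\phi_{i\bar{j}k}|^2}{(1+\phi_{i\bar{i}})(1+\phi_{j\bar{j}})}.
\end{equation*}

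Finally, I would apply the same Cauchy--Schwarz manipulation as in the proof of Proposition 3 to obtain $|\nabla \sigma_{\epsilon,\beta'}|^2_{\omega_{\phi_\epsilon}} \leq \sigma_{\epsilon,\beta'}\sum_{i,j,k}\frac{1}{(1+\phi_{i\bar{i}})(1+\phi_{j\bar{j}})}|\phi_{i\bar{j}k}|^2$, then use $\Delta_{\omega_{\phi_\epsilon}}\log\sigma_{\epsilon,\beta'} = \sigma_{\epsilon,\beta'}^{-1}\Delta_{\omega_{\phi_\epsilon}}\sigma_{\epsilon,\beta'} - \sigma_{\epsilon,\beta'}^{-2}|\nabla\sigma_{\epsilon,\beta'}|^2$ to cancel the third-derivative term. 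The uniform lower bound $\sigma_{\epsilon,\beta'} \geq (n-1)C$ (coming from the rough bound $\omega_{\phi_\epsilon}\geq C^{-1}\omega_0$ together with $\omega_{\beta',\epsilon}\leq C\,\omega_0/(|z|^2+\epsilon)^{1-\beta'}$ controlling tangent directions as in the proof of Proposition 3) absorbs the $\sigma_{\epsilon,\beta'}^{-1}$ factor, yielding exactly the desired inequality $\Delta_{\omega_{\phi_\epsilon}} \log \sigma_{\epsilon,\beta'} \geq -C(|z|^2+\epsilon)^{-\beta'}-C$.

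The only place where one must be careful — and the only candidate for real difficulty — is verifying that the exponent in the perpendicular-direction bound for $\Delta_{\omega_{\beta',\epsilon}}\log\frac{|z|^2+\epsilon}{|z|^2+\epsilon e^h}$ does become $-\beta'$ (and not $-\beta$) once we use the inverse of $\omega_{\beta',\epsilon}$, and that the extra term from $(\beta-\beta')\log(|z|^2+\epsilon)$ indeed has the right sign. Both checks are routine, so I expect no substantive obstruction; the whole statement is a mechanical $\beta \rightsquigarrow \beta'$ reparametrization of Proposition 3 enabled by the observation that shrinking the cone angle only adds a nonnegative $(1,1)$-form to $F_{\epsilon,\beta'}$.
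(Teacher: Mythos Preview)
Your proposal is correct and follows essentially the same approach as the paper. The paper's own proof is terser---it only spells out the new ingredient, namely that $\sqrt{-1}\partial\bar\partial F_{\epsilon,\beta'}\geq \sqrt{-1}\partial\bar\partial F_{\epsilon,\beta}$ (since the extra summand has nonnegative complex Hessian) and then traces the already-established lower bound for $\sqrt{-1}\partial\bar\partial F_{\epsilon,\beta}$ against $\omega_{\beta',\epsilon}$ to pick up the exponent $-\beta'$---while leaving the Yau identity, the Cauchy--Schwarz step, and the lower bound on $\sigma_{\epsilon,\beta'}$ implicit by reference to Proposition~3; you have simply made these steps explicit.
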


\begin{proof}{(Proof of Proposition 4)   }
 Since $\sqrt{-1}\partial\bar{\partial}F_{\epsilon,\beta'}=\sqrt{-1}\partial\bar{\partial}F_{\epsilon,\beta}+\sqrt{-1}\partial\bar{\partial}\text{log }(|z|^2+\epsilon)
\geq \sqrt{-1}\partial\bar{\partial}F_{\epsilon,\beta}\geq -\sqrt{-1}C\{\frac{1}{|z|^2+\epsilon}\mathrm{d}z\wedge\mathrm{d}\bar{z}+\sum_i\mathrm{d}z_i\wedge\mathrm{d}\bar{z}_i\}+\text{mixed terms}$, we have $\Delta_{\omega_{\beta',\epsilon}}F_{\epsilon,\beta'}=\text{tr }_{\omega_{\beta',\epsilon}} \sqrt{-1}\partial\bar\partial F_{\epsilon,\beta'}\geq -C-C(|z|^2+\epsilon)^{1-\beta'}(|z|^2+\epsilon)^{-1}=-C-C(|z|^2+\epsilon)^{-\beta'}$.
\end{proof}

Now modify the differential inequality by adding an auxiliary function $C(\beta-\beta')^{-2}(|z|^2+\epsilon)^{\beta-\beta'}$, and use the \emph{rough bound} $\omega_{\phi_\epsilon}\leq C\frac{\omega_0}{(|z|^2+\epsilon)^{1-\beta}}$, we get the inequality:

\begin{align*}
\Delta_{\omega_{\phi_\epsilon}} &\{\text{log }\sigma_{\epsilon,\beta'} +C(\beta-\beta')^{-2}(|z|^2+\epsilon)^{\beta-\beta'}\}\\
&\geq C(|z|^2+\epsilon)^{-\beta'}-C(|z|^2+\epsilon)^{-\beta'}-C\\
&\geq-C 
\end{align*}

Since $\Delta_{\omega_{\phi_\epsilon}}\{C\beta^{-2}(|z|^2+\epsilon)^\beta\}\geq C$, the final inequality will be the following proposition:

\begin{prop}
$$\Delta_{\omega_{\phi_\epsilon}} \big\{\text{log }\sigma_{\epsilon,\beta'}+C(\beta-\beta')^{-2}(|z|^2+\epsilon)^{\beta-\beta'}+C\beta^{-2}(|z|^2+\epsilon)^\beta\big\} \geq 0$$
\end{prop}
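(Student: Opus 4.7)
The plan is to combine the inequality from Proposition 4, namely $\Delta_{\omega_{\phi_\epsilon}} \text{log } \sigma_{\epsilon,\beta'} \geq -C(|z|^2+\epsilon)^{-\beta'} - C$, with the Laplacians of two carefully designed nonnegative barrier functions that each produce a positive contribution of the correct order to cancel one of the two bad terms on the right-hand side. The barrier $C(\beta-\beta')^{-2}(|z|^2+\epsilon)^{\beta-\beta'}$ is tuned to defeat the singular term $(|z|^2+\epsilon)^{-\beta'}$, and the barrier $C\beta^{-2}(|z|^2+\epsilon)^\beta$ is tuned to defeat the constant $-C$. Both assertions reduce to one core calculation: a uniform lower bound for $\Delta_{\omega_{\phi_\epsilon}}(|z|^2+\epsilon)^\alpha$ with $\alpha\in(0,1]$, obtained through the rough bound.

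The core calculation is as follows. Direct differentiation gives
\[
\sqrt{-1}\partial\bar{\partial}(|z|^2+\epsilon)^\alpha=\alpha(|z|^2+\epsilon)^{\alpha-2}(\alpha|z|^2+\epsilon)\sqrt{-1}\,\mathrm{d}z\wedge\mathrm{d}\bar z\geq \alpha^2(|z|^2+\epsilon)^{\alpha-1}\sqrt{-1}\,\mathrm{d}z\wedge\mathrm{d}\bar z,
\]
where the final inequality uses $\alpha\le 1$ to deduce $\alpha|z|^2+\epsilon\geq\alpha(|z|^2+\epsilon)$. Since the function depends only on $|z|^2$, its complex Hessian has support entirely in the $(z,\bar z)$ block, so tracing against $\omega_{\phi_\epsilon}$ only activates the entry $(\omega_{\phi_\epsilon})^{1\bar 1}$ of the inverse metric. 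The \emph{rough bound} $\omega_{\phi_\epsilon}\leq C(|z|^2+\epsilon)^{\beta-1}\omega_0$ produces the matching inequality $(\omega_{\phi_\epsilon})^{1\bar 1}\geq C^{-1}(|z|^2+\epsilon)^{1-\beta}$ on the coordinate ball, and hence
\[
\Delta_{\omega_{\phi_\epsilon}}(|z|^2+\epsilon)^\alpha\;\geq\;\frac{\alpha^2}{C}(|z|^2+\epsilon)^{\alpha-\beta}.
\]

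Specializing to $\alpha=\beta-\beta'$ yields $\Delta_{\omega_{\phi_\epsilon}}(|z|^2+\epsilon)^{\beta-\beta'}\geq C^{-1}(\beta-\beta')^{2}(|z|^2+\epsilon)^{-\beta'}$, so upon multiplying by a sufficiently large constant $C$, the combination $C(\beta-\beta')^{-2}(|z|^2+\epsilon)^{\beta-\beta'}$ has Laplacian bounded below by the desired $C(|z|^2+\epsilon)^{-\beta'}$ that exactly cancels the singular term from Proposition 4; this is precisely the intermediate inequality written in the text. Specializing instead to $\alpha=\beta$ yields $\Delta_{\omega_{\phi_\epsilon}}(|z|^2+\epsilon)^\beta\geq C^{-1}\beta^{2}$, so that $C\beta^{-2}(|z|^2+\epsilon)^\beta$ has Laplacian at least a positive constant, absorbing the residual $-C$. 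Adding the three inequalities produces exactly the statement of Proposition 5.

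The real work is concentrated in the core Hessian computation and, more importantly, the bookkeeping of how the rough bound enters via $(\omega_{\phi_\epsilon})^{1\bar 1}$: the delicate point is that the factor $(|z|^2+\epsilon)^{1-\beta}$ picked up from the rough bound is exactly what converts the $(|z|^2+\epsilon)^{\alpha-1}$ from the Hessian into $(|z|^2+\epsilon)^{\alpha-\beta}$, so that the exponents match perfectly when $\alpha-\beta=-\beta'$ and $\alpha-\beta=0$. The blow-up of the prefactor $(\beta-\beta')^{-2}$ as $\beta'\uparrow\beta$ is benign at this stage because $\beta'$ is a fixed auxiliary angle; one merely has to remember, when later feeding Proposition 5 into the local Moser iteration, that the resulting barrier depends quantitatively on the gap $\beta-\beta'$.
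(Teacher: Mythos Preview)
Your proof is correct and follows essentially the same approach as the paper. The paper's argument is terse: it simply asserts that adding the first barrier and ``using the rough bound'' yields $\Delta_{\omega_{\phi_\epsilon}}\{C(\beta-\beta')^{-2}(|z|^2+\epsilon)^{\beta-\beta'}\}\geq C(|z|^2+\epsilon)^{-\beta'}$, and then that $\Delta_{\omega_{\phi_\epsilon}}\{C\beta^{-2}(|z|^2+\epsilon)^\beta\}\geq C$; you have supplied the explicit Hessian computation and the mechanism by which the rough upper bound on $\omega_{\phi_\epsilon}$ becomes a lower bound on $(\omega_{\phi_\epsilon})^{1\bar 1}$, which is exactly what the paper intends but omits.
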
 

\vspace{0.2cm}

\subsection{Moser's Iteration}


We are going to apply the local Moser Iteration to the function $f=\text{log }\sigma_{\epsilon,\beta'}+C(\beta-\beta')^{-2}(|z|^2+\epsilon)^{\beta-\beta'}+C\beta^{-2}(|z|^2+\epsilon)^\beta$, notice that since $\sigma_{\epsilon,\beta'}=\text{tr }_{\omega_{\beta',\epsilon}} \omega_{\phi_\epsilon}\geq \text{tr }_{\omega_{\beta',\epsilon}} C^{-1}\omega_0\geq (n-1)C^{-1}$, we could add a constant to $f$ to make it positive.\\

Now a local inequality is settled up:
\begin{equation}
\Delta_{\omega_{\phi_\epsilon}} f \geq 0
\end{equation}

We are going to do Moser Iteration on the pair of $\omega_{\phi_\epsilon}$-metric balls $V_\alpha$ and $U_\alpha$, since there are only finitely many, there is no loss of generality to assume $V_\alpha$ is radius 1 and $U_\alpha$ is radius 2. The $L^\infty$ norm of $f$ on metric $1$-ball will be bounded by the $L^2$ bound of $f$ on metric $2$-ball,  and it is carried out out below, which ignores $\epsilon$ for simplicity:

%
\begin{align*}
\frac{4p}{(p+1)^2} \int_X \eta^2 |\nabla f^\frac{p+1}{2}|^2\omega_\phi^n
&=p\int_X \eta^2 f^{p-1}|\nabla f|^2 \omega_{\phi}^n\\
&= \int_X \{ \nabla(\eta^2 f^p \nabla f)-2\eta f^p \nabla \eta \cdot \nabla f-\eta^2 f^p\Delta_{\omega_\phi} f \}\omega_\phi^n\\
&\leq \int_X -2\eta f^p \nabla \eta \cdot \nabla f \omega_\phi^n\\
&=\int_X -\frac{4}{p+1}  f^\frac{p+1}{2}\nabla \eta \cdot \eta \nabla f^\frac{p+1}{2}\quad  \omega_\phi^n\\
&\leq \int_X \frac{2}{p+1} \{\delta \eta^2|\nabla f^\frac{p+1}{2}|^2 +\delta^{-1}f^{p+1}|\nabla \eta|^2\} \omega_\phi^n
\end{align*}

Then by taking $\delta=\frac{p}{p+1}$ , we would end with the control 
\[
\int_X \eta^2 |\nabla f^\frac{p+1}{2}|^2 \omega_\phi^n \leq (\frac{p+1}{p})^2\int_X f^{p+1}|\nabla \eta|^2\omega_\phi^n\leq 4\int_X f^{p+1}|\nabla \eta|^2\omega_\phi^n
\]

Since our manifold $(M,\omega_{\phi_\epsilon})$ has a uniform positive lower bound on the Ricci curvature and uniform constant volume, we have a uniform Sobolev constant, which means:


\begin{align*}
\{\int_X |\eta f^\frac{p+1}{2}|^\frac{2n}{n-1} \omega_\phi^n\}^\frac{n-1}{n} 
&\leq C_S \int_X \{ \eta^2 f^{p+1}+|f^\frac{p+1}{2} \nabla \eta+\eta \nabla f^\frac{p+1}{2}|^2 \}\omega_\phi^n\\
&\leq C_S \int_X \{ \eta^2 f^{p+1}+2(\eta^2 |\nabla f^\frac{p+1}{2}|^2+f^{p+1}|\nabla \eta|^2)\} \omega_\phi^n\\
&\leq 10 C_S \int_X (\eta^2+|\nabla \eta|^2) f^{p+1}\omega_\phi^n
\end{align*}

Now by taking a suitable cut-off function on the real line $\eta$ and compositing with the distance function on the manifold we get a cut-off function which satisfies $\eta\equiv 0$ outside the metric ball $B_\phi (R)$ and $\eta\equiv 1$ inside the metric ball $B_\phi(S)$ and $|\nabla \eta|\leq \frac{C}{R-S}$, plug in this cut-off function to the above inequality we get:

\[
||f||_{L^{(p+1)\frac{n}{n-1}}(B_\phi(r_1))}\leq (10C_S)^\frac{1}{p+1}\{1+\frac{C}{(R-S)^2}\}^\frac{1}{p+1}||f||_{L^{p+1}(B_\phi(r_2))}
\]

The Moser Iteration technique uses $\gamma_m$ to replace $p+1$ in the above inequality and use the pair of radius $r_m=1+2^{-m}$ and $r_{m+1}$ to replace $R$ and $S$ on the $m$-th step, and eventually will end up with 

\begin{equation}
||f||_{L^\infty(B_\phi(1))} \leq C ||f||_{L^2(B_\phi(2))}
\end{equation}\\


Since $\sigma_{\epsilon,\beta'}\leq C(1+(|z|^2+\epsilon)^{\beta-1})\leq C(|z|^2+\epsilon)^{\beta-1}$, the $L^2$ bound of $f$ on the unit $\omega_{\phi_\epsilon}$-metric ball is bounded by the following Lemma:
 
 \begin{lem}
 $$\int_{B_{\phi_\epsilon}(p,2)} |\text{log }\sigma_{\epsilon,\beta'}|^2 \omega_{\phi_\epsilon}^n
 \leq C\int_{|z|\leq 2\sqrt{C}} |\text{log} (|z|^2+\epsilon)|^2 (|z|^2+\epsilon)^{\beta-1} \sqrt{-1}\mathrm{d}z\wedge\mathrm{d}\bar{z}\leq C$$
 
 $$\int_{B_{\phi_\epsilon}(p,2)} |\text{log }\sigma_{\epsilon,\beta'}|(|z|^2+\epsilon)^{\beta-\beta'} \omega_{\phi_\epsilon}^n 
 \leq C\int_{|z|\leq 2\sqrt{C}} |\text{log} (|z|^2+\epsilon)| (|z|^2+\epsilon)^{2\beta-\beta'-1} \sqrt{-1}\mathrm{d}z\wedge\mathrm{d}\bar{z}\leq C$$
 
 $$\int_{B_{\phi_\epsilon}(p,2)} |\text{log }\sigma_{\epsilon,\beta'}|(|z|^2+\epsilon)^{\beta} \omega_{\phi_\epsilon}^n 
 \leq C\int_{|z|\leq 2\sqrt{C}} |\text{log} (|z|^2+\epsilon)| (|z|^2+\epsilon)^{2\beta-1} \sqrt{-1}\mathrm{d}z\wedge\mathrm{d}\bar{z}\leq C$$

 \end{lem}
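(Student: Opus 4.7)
The plan is to reduce each of the three integrals to a genuinely one-complex-dimensional Euclidean integral in the $z$-variable, using three ingredients already established in the previous subsections: the rough two-sided bound $C^{-1}\omega_0\leq \omega_{\phi_\epsilon}\leq \frac{C}{(|S|_h^2+\epsilon)^{1-\beta}}\omega_0$ from Proposition 1, the Monge--Amp\`ere identity $\omega_{\phi_\epsilon}^n=e^{F_{\epsilon,\beta}}\omega_{\beta,\epsilon}^n$ with $e^{F_{\epsilon,\beta}}$ uniformly bounded, and the containment $B_{\phi_\epsilon}(p,2)\subset B_{Euc}(p,2\sqrt{C})$ recorded in part $1$ of Lemma 2.

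First I would bound the logarithmic factor pointwise. The estimate $\sigma_{\epsilon,\beta'}\leq C(|z|^2+\epsilon)^{\beta-1}$ noted immediately before the statement of this lemma, combined with the lower bound $\sigma_{\epsilon,\beta'}\geq c>0$ coming from $\omega_{\phi_\epsilon}\geq C^{-1}\omega_0$, gives $|\log\sigma_{\epsilon,\beta'}|\leq C|\log(|z|^2+\epsilon)|+C$. Next I would bound the volume form: since the local model satisfies $\omega_{\beta,\epsilon}^n = n!\,\beta^2(|z|^2+\epsilon)^{\beta-1}dV_{Euc}$ and $F_{\epsilon,\beta}$ is uniformly bounded (the $\phi_\epsilon$ term by the uniform $C^0$ estimate, the other terms by explicit smoothness and boundedness of $h_{\omega_0}$, $h$, and $\log\frac{|z|^2+\epsilon}{|z|^2+\epsilon e^h}$), the Monge--Amp\`ere identity yields $\omega_{\phi_\epsilon}^n \leq C(|z|^2+\epsilon)^{\beta-1}dV_{Euc}$.

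Combining these two pointwise estimates and enlarging the domain of integration from $B_{\phi_\epsilon}(p,2)$ to $B_{Euc}(p,2\sqrt{C})$ via Lemma 2, each left-hand side is dominated by
\[
C\int_{B_{Euc}(p,2\sqrt{C})} |\log(|z|^2+\epsilon)|^a (|z|^2+\epsilon)^{\beta-1+s}\, dV_{Euc},
\]
with $(a,s)=(2,0),\ (1,\beta-\beta'),\ (1,\beta)$ in the three cases respectively. Since the integrand depends only on the $z$-coordinate, Fubini integrating out $z_2,\ldots,z_n$ over the fixed polydisc produces exactly the claimed one-complex-dimensional integrals up to a harmless constant, with $\beta-1+s$ producing the stated exponents $\beta-1$, $2\beta-\beta'-1$, and $2\beta-1$.

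Finally I would verify that each resulting 1D integral is finite uniformly in $\epsilon$. In polar coordinates $r=|z|$, the integrand has the form $r^{2c+1}|\log(r^2+\epsilon)|^a\,dr$, which is integrable near zero whenever $c>-1$, with bound uniform in $\epsilon$ since $|\log(r^2+\epsilon)|^a \leq C(1+|\log r|^a)$ for $r$ in a bounded range. The three exponents $c=\beta-1,\ 2\beta-\beta'-1,\ 2\beta-1$ all exceed $-1$ because $\beta\in(0,1)$ and $\beta'<\beta<2\beta$. I do not foresee a serious obstacle; the only care needed is in the bookkeeping of the exponents to confirm strict supercriticality in each of the three cases, and in justifying that the Fubini reduction respects the compactness of the auxiliary coordinate directions.
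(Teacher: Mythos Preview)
Your proposal is correct and follows exactly the approach the paper intends: the paper does not give an explicit proof of this lemma, merely prefacing it with the pointwise bound $\sigma_{\epsilon,\beta'}\leq C(|z|^2+\epsilon)^{\beta-1}$ and relying on the earlier volume-form control and the containment $B_{\phi_\epsilon}(p,2)\subset B_{Euc}(p,2\sqrt{C})$ from Lemma~2. Your write-up simply spells out these implicit steps (pointwise bound on $|\log\sigma_{\epsilon,\beta'}|$, Monge--Amp\`ere bound on $\omega_{\phi_\epsilon}^n$, Fubini reduction, and the elementary check that each exponent $\beta-1$, $2\beta-\beta'-1$, $2\beta-1$ exceeds $-1$), so there is nothing to correct.
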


Since $D$ is covered by finitely many coordinate balls $\mathcal{V}_\alpha$,  and on each of those balls $tr_{\omega_{\beta',\epsilon}} \omega_{\phi_\epsilon} \leq C$.
Comparing with the rough bound $C^{-1}\omega_0\leq \omega_{\phi_\epsilon} \leq C \frac{\omega_0}{(|z|^2+\epsilon)^{1-\beta}}$ achieved in Proposition 2, this yields us a finer control about the metric $\omega_{\phi_\epsilon}$ on the tangential direction. $\text{tr}_{\omega_{\beta',\epsilon}}\omega_{\phi_\epsilon}\leq C$ implies that $\omega_{\phi_\epsilon}\leq C\omega_{\beta',\epsilon}$, thus we have uniform bound on the divisor direction $\lambda_2,\cdots,\lambda_n\leq C$. Combined with the previous rough bound $\lambda_2,\cdots,\lambda_n\geq C^{-1}$  and the asymptotic behavior of the volume form $\lambda_1\lambda_2\cdots\lambda_n\sim \frac{1}{(|z|^2+\epsilon)^{1-\beta}}$, we conclude that $\lambda_1\sim \frac{1}{(|z|^2+\epsilon)^{1-\beta}}$. Finally we get the desired uniform control about the metric $\omega_{\phi_\epsilon}$.

\begin{thm}
For a uniform constant $C$, we have control $C^{-1}\omega_{\beta,\epsilon}^\alpha\leq \omega_{\phi_\epsilon} \leq C\omega_{\beta,\epsilon}^\alpha$ on $\mathcal{V}_\alpha$.
\end{thm}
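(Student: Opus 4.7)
The plan is to assemble the pieces established in this section: the differential inequality $\Delta_{\omega_{\phi_\epsilon}} f \geq 0$ of Proposition 5 (with $f$ being $\log\sigma_{\epsilon,\beta'}$ plus the two $(|z|^2+\epsilon)$-correctors and an additive constant making it positive), the Moser iteration estimate $\|f\|_{L^\infty(B_{\phi_\epsilon}(1))} \leq C\|f\|_{L^2(B_{\phi_\epsilon}(2))}$ derived just above, and the weighted integral bounds of Lemma 6. Combining these yields a uniform upper bound on $\sigma_{\epsilon,\beta'} = \text{tr}_{\omega_{\beta',\epsilon}}\omega_{\phi_\epsilon}$ inside the coordinate ball $\mathcal{V}_\alpha$; I then convert this into the claimed two-sided pinching against $\omega_{\beta,\epsilon}^\alpha$ using the rough estimate of Proposition 2 and the Monge-Amp\`ere equation (2).

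The execution goes as follows. First, on each pair $V_\alpha \subset U_\alpha$ of $\omega_{\phi_\epsilon}$-metric balls (rescaled so the radii are $1$ and $2$), the uniform Sobolev constant that comes from the uniform lower Ricci bound and fixed total volume powers the Moser iteration without modification. Next, one has to bound $\|f\|_{L^2(B_{\phi_\epsilon}(2))}$ uniformly in $\epsilon$: by Lemma 2 the metric ball is contained in a Euclidean ball of controlled radius, on which the volume form $\omega_{\phi_\epsilon}^n$ is given explicitly by the Monge-Amp\`ere equation, and the three integrals in Lemma 6 check that each summand of $f^2\,\omega_{\phi_\epsilon}^n$ contributes only an $\epsilon$-independent constant, thanks to the integrability of the weights $(|z|^2+\epsilon)^{\beta-1}$, $(|z|^2+\epsilon)^{2\beta-\beta'-1}$, and $(|z|^2+\epsilon)^{2\beta-1}$ against $\sqrt{-1}\,\mathrm{d}z\wedge\mathrm{d}\bar z$. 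Since the two corrector summands of $f$ are bounded pointwise on $\mathcal{V}_\alpha$, the resulting pointwise bound $f \leq C$ translates into $\sigma_{\epsilon,\beta'} \leq C$ on $\mathcal{V}_\alpha$, i.e.\ $\omega_{\phi_\epsilon} \leq C\omega_{\beta',\epsilon}$.

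Finally, convert this $\beta'$-trace bound into a comparison with $\omega_{\beta,\epsilon}^\alpha$. In the Euclidean frame near $D$ the inequality $\omega_{\phi_\epsilon} \leq C\omega_{\beta',\epsilon}$ pins the tangential eigenvalues $\lambda_2,\ldots,\lambda_n$ of $\omega_{\phi_\epsilon}$ from above, and Proposition 2 pins them from below by $C^{-1}$. The Monge-Amp\`ere equation (2) forces the product $\lambda_1\lambda_2\cdots\lambda_n$ to be comparable to $(|z|^2+\epsilon)^{\beta-1}$ uniformly, so the remaining eigenvalue $\lambda_1$ must be comparable to $(|z|^2+\epsilon)^{\beta-1}$, which matches the singular eigenvalue of $\omega_{\beta,\epsilon}^\alpha$ up to constants. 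This gives the stated two-sided comparison on $\mathcal{V}_\alpha$, with $C$ independent of $\epsilon$.

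The main obstacle is precisely the $L^2$ bookkeeping at the level of Proposition 3: the raw differential inequality carries an unsigned negative term $-C(|z|^2+\epsilon)^{-\beta}$ that cannot be absorbed at the original angle $\beta$, which is why one drops the angle to $\beta'<\beta$ and introduces the corrector $C(\beta-\beta')^{-2}(|z|^2+\epsilon)^{\beta-\beta'}$. The coefficient $(\beta-\beta')^{-2}$ is fine because $\beta'$ is a fixed choice, and the corrector itself is bounded on $\mathcal{V}_\alpha$ by a constant depending only on $\beta$, $\beta'$, and the size of the coordinate ball, so pointwise bounds on $f$ indeed transfer to pointwise bounds on $\sigma_{\epsilon,\beta'}$ and the bootstrap via the Monge-Amp\`ere volume form closes the argument.
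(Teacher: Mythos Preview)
Your proposal is correct and follows essentially the same route as the paper: Moser iteration on the subsolution $f$ of Proposition~5, the $L^2$ bound from the integral lemma (Lemma~5 in the paper's numbering, not Lemma~6), and then the eigenvalue argument combining the tangential upper bound from $\omega_{\phi_\epsilon}\le C\omega_{\beta',\epsilon}$, the lower bound from Proposition~2, and the Monge--Amp\`ere volume asymptotics to pin $\lambda_1\sim(|z|^2+\epsilon)^{\beta-1}$. The only discrepancy is the lemma numbering.
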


The standard Evans-Krylov theory outside $D$ gives us high order estimate of $\omega_{\phi_\epsilon}$ which will yield a smooth limit K\"{a}hler-Einstein metric on $X\backslash D$. By letting $\epsilon\to 0$ in the above theorem, we get the theorem:

\begin{thm}
On a Fano manifold $X$,let $D$ is a smooth anti-canonical divisor, $\beta \in (0,1)$, if the twisted K-energy $E_{(1-\beta)D}$is proper, then there exists a weak conical K\"{a}hler-Einstein metric in $c_1(X)$ with angle $2\pi \beta$ along $D$.
\end{thm}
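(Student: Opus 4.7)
The plan is to assemble the result by passing to the limit $\epsilon \to 0$ in the family of smooth solutions $\omega_{\phi_\epsilon}$ provided by the smooth continuity method, using the local uniform estimates already developed in the preceding sections. First, I would invoke Proposition~1 under the properness hypothesis on $E_{(1-\beta)D}$ to obtain, for every $\epsilon \in (0,1]$, a smooth solution $\phi_\epsilon$ of the twisted Monge--Amp\`ere equation
\begin{equation*}
\omega_{\phi_\epsilon}^n = e^{-\beta \phi_\epsilon + h_{\omega_0}}\frac{\omega_0^n}{(|S|_h^2+\epsilon)^{1-\beta}},
\end{equation*}
together with a uniform $C^0$ bound on $\phi_\epsilon$. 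This is the only step where properness is used, and it follows by combining the decreasing of the twisted K-energy along the path with a Kolodziej-type estimate at the initial metric, exactly as in Proposition~1.

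Next, I would use the Chern--Lu argument of Proposition~2 to convert the $C^0$ bound into the rough $C^2$ bound
\begin{equation*}
C^{-1}\omega_0 \leq \omega_{\phi_\epsilon} \leq \frac{C}{(|S|_h^2+\epsilon)^{1-\beta}}\,\omega_0,
\end{equation*}
which supplies the rough geometry of Lemma~2 and the diameter bound of Lemma~3. These together allow me to select the fixed finite cover $\{\mathcal{V}_\alpha\}$ of $D$ and identify, inside each coordinate chart, the $\omega_{\phi_\epsilon}$-metric balls $V_\alpha \subset U_\alpha$ on which the Moser iteration will run. The uniform positivity of $\mathrm{Ric}(\omega_{\phi_\epsilon}) \geq \beta\,\omega_{\phi_\epsilon}$ on the Fano side guarantees a uniform Sobolev constant, which is the analytic ingredient that makes Moser iteration $\epsilon$-independent.

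With these in place, I would apply Theorem~1 of Section~3: the differential inequality of Proposition~5 combined with the local Moser iteration on the pair $V_\alpha \subset U_\alpha$, fed by the $L^2$ bound of Lemma~4, yields the key two-sided bound
\begin{equation*}
C^{-1}\omega_{\beta,\epsilon}^\alpha \leq \omega_{\phi_\epsilon} \leq C\omega_{\beta,\epsilon}^\alpha \quad \text{on } \mathcal{V}_\alpha,
\end{equation*}
with $C$ independent of $\epsilon$. Away from $D$, Evans--Krylov and Schauder theory then upgrade the $C^2$ control to uniform $C^{k,\alpha}_{\mathrm{loc}}(X \setminus D)$ bounds on $\phi_\epsilon$, since the right-hand side of the Monge--Amp\`ere equation is smooth on $X \setminus D$ and bounded away from zero and infinity on compact subsets there.

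Finally, I would extract a limit: by Arzel\`a--Ascoli and the $C^0$ bound, a subsequence $\phi_\epsilon$ converges in $C^0(X)$ and in $C^{k}_{\mathrm{loc}}(X \setminus D)$ to some $\phi \in C^0(X) \cap C^\infty(X \setminus D)$, and the limit metric $\omega_\phi$ is smooth K\"ahler--Einstein on $X \setminus D$. The local estimate above passes to the limit on each $\mathcal{V}_\alpha$ to give $C^{-1}\omega_\beta^\alpha \leq \omega_\phi \leq C\omega_\beta^\alpha$, which is precisely the quasi-isometry to the model conical metric required by the definition of a weak conical K\"ahler--Einstein metric. The main obstacle is ensuring that the \emph{uniform} two-sided Laplacian bound from Theorem~1 genuinely survives the limit process as a quasi-isometry with the singular model $\omega_\beta^\alpha$ rather than merely with $\omega_{\beta,\epsilon}^\alpha$; here one uses that $\omega_{\beta,\epsilon}^\alpha \to \omega_\beta^\alpha$ smoothly on compact subsets of $\mathcal{V}_\alpha \setminus D$ and that the constant $C$ does not depend on $\epsilon$, so the bound persists on $\mathcal{V}_\alpha \setminus D$ and the quasi-isometric structure along $D$ is the content of the theorem.
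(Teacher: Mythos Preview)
Your proposal is correct and follows exactly the paper's approach: the paper's own argument for this theorem is just the one-sentence remark that Evans--Krylov gives higher-order estimates away from $D$, so a smooth limit exists on $X\setminus D$, and letting $\epsilon\to 0$ in the two-sided bound $C^{-1}\omega_{\beta,\epsilon}^\alpha \leq \omega_{\phi_\epsilon} \leq C\omega_{\beta,\epsilon}^\alpha$ yields the quasi-isometry to the conical model. Your write-up simply unpacks this by explicitly citing the chain Proposition~1 $\to$ Proposition~2 $\to$ Moser iteration $\to$ two-sided bound, together with the subsequence extraction; nothing is missing and nothing is done differently.
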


%

\section{General Current Equation}

On a quite general situation of compact K\"{a}hler manifolds, we could set up similar problem of prescribing K\"{a}hler-Einstein metrics on $X$ with conical singularities along  a smooth hypersurface $D$. Pick a smooth background K\"{a}hler metric $\omega_0$ in a fixed cohomology class $\mathfrak{W}$, and suppose $\mathfrak{A}=c_1(L_D)$  satisfies the cohomological condition: 

$$c_1(X)=\mu \mathfrak{W}+(1-\beta)\mathfrak{A}$$

Let

$$Ric(\omega_0)=\mu \omega_0+(1-\beta)\alpha_0$$

where $\alpha_0$ is a smooth $(1,1)$ form in $\mathfrak{A}$ and it is the curvature form of some Hermitian metric $h$ on $L_D$. Let $S$ be holomorphic section of $L_D$ which defines $D$, and in $\mathfrak{A}$ we define a family of smooth $(1,1)$ form $\chi_\epsilon=\alpha_0+\sqrt{-1}\partial\bar\partial \text{log }(|S|_h^2+\epsilon)$, notice that $\chi_\epsilon$ approaches the \emph{delta distribution} $\chi$ along $D$ in the current sense as $\epsilon \to 0$. Moreover, as a $(1,1)$ form, $\chi_\epsilon$ has lower bound:

\begin{align*}
\chi_\epsilon=\alpha_0+\sqrt{-1}\partial \bar\partial \text{log }(|S|_h^2+\epsilon)
&=\alpha_0+\sqrt{-1}\partial\frac{|S|_h^2\bar{\partial}\text{log }|S|_h^2}{|S|_h^2+\epsilon}\\
&=\alpha_0+\frac{|S|_h^2}{|S|_h^2+\epsilon}\sqrt{-1}\partial\bar\partial \text{log }|S|_h^2+\sqrt{-1}\frac{\epsilon}{|S|_h^2(|S|_h^2+\epsilon)^2}\partial |S|_h^2\wedge \bar\partial |S|_h^2\\
&\geq \alpha_0+\frac{|S|_h^2}{|S|_h^2+\epsilon}(-\alpha_0)\\
&=\frac{\epsilon}{|S|_h^2+\epsilon}\alpha_0
\end{align*}

In order to solve the singular equation

\begin{equation}
Ric(\omega)=\mu \omega+(1-\beta)\chi,
\end{equation}

we first solve the smoothing equation instead,

\begin{equation}
Ric(\omega_{\phi_\epsilon})=\mu \omega_{\phi_\epsilon}+(1-\beta)\chi_\epsilon
\end{equation}\\

and then consider the limit metric as $\epsilon\to 0$.

For any $\epsilon>0$, we could set up the \emph{Continuity Path} $\star_\epsilon$:

\begin{equation*}
Ric(\omega_{\phi_\epsilon(t)})=(1-t)\{\mu \omega_0+(1-\beta)\alpha_0\}+t\{\mu {\omega_{\phi_\epsilon(t)}}+(1-\beta)\chi_\epsilon\}
\end{equation*}


The solvability of $\star_\epsilon$ is different for the cases $\mu<0, \mu=0$ and $\mu>0$ and the below section distincts them, and will be treated differently.\\



Case 1: $\mu<0$. By standard argument of \emph{Continuity Method}, $\star_\epsilon$ could be solved up to $t=1$. 

The openness part follows from the injectivity of the operator $\Delta_{\omega_{\phi_\epsilon(t)}}+t\mu$ since $\mu<0$.

The $C^0$ bound is by combining \emph{Maximum Principle} to the equation $$\omega_{\phi_\epsilon(t)}^n=e^{-t\mu \phi_\epsilon(t)+C_{\epsilon,t}}\frac{\omega_0^n}{(|S|_h^2+\epsilon)^{(1-\beta)t}}$$ 
and Kolodziej's $L^p$ estimate.
Concretely speaking, first solve the equation for $t\in [0,\delta_0]$ by openness and then by considering the maximum point of $\phi_\epsilon(t)$ we know that

 $$\sup e^{-t\mu \phi_\epsilon(t)+C_{\epsilon,t}}\leq \sup (|S|_h^2+\epsilon)^{(1-\beta)t}\leq M.$$
 
The right hand of the equation is uniform in $L^p$ therefore we could apply $L^p$ estimate to achive $C^0$ estimate for the potential $\phi_\epsilon(t)$. Eventually, we could solve the equation up to $t=1$ with uniform $C^0$ bound on $\phi_\epsilon$. And the higher order bound (depending on $\epsilon$) is obtained by Aubin.  \\


Case 2: $\mu=0$. We use the smooth Calabi's conjecture instead of continuity method.

\begin{thm}
Suppose $D$ is a smooth hypersurface in the K\"{a}hler manifold $X$, and $c_1(X)=(1-\beta)c_1(L_D)$, then we have a \emph{weak} Ricci flat K\"{a}hler metric with conical singularities of angle $2\pi \beta$  in any K\"{a}hler class.
\end{thm}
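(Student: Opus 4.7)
The plan is to adapt the argument of Sections~2--3, but since $\mu=0$ makes neither the Fano continuity path nor the negative-coefficient maximum principle directly available, I would instead solve each smoothed Monge-Amp\`ere equation by a single direct application of Yau's solution of the Calabi conjecture, and then rerun the local Moser iteration of Section~3 to pass to the $\epsilon\to 0$ limit.

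First, using the $\partial\bar\partial$-lemma I would pick $\omega_0$ in the prescribed K\"ahler class $\mathfrak W$ so that $Ric(\omega_0)=(1-\beta)\alpha_0$, where $\alpha_0$ is the curvature of a fixed Hermitian metric $h$ on $L_D$. A direct Ricci computation shows that if $\phi_\epsilon$ solves the smooth Monge-Amp\`ere equation
\[
\omega_{\phi_\epsilon}^n \;=\; e^{C_\epsilon}\, \frac{\omega_0^n}{(|S|_h^2+\epsilon)^{1-\beta}},
\]
with $C_\epsilon$ chosen so both sides integrate to $\int_X\omega_0^n$, then $Ric(\omega_{\phi_\epsilon})=(1-\beta)\chi_\epsilon$, i.e.\ the smoothed equation of the theorem in the $\mu=0$ case. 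The right-hand side is smooth, strictly positive, and of matching total mass, so Yau's theorem produces a unique smooth $\phi_\epsilon$ for every $\epsilon>0$.

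Next I would extract uniform, $\epsilon$-independent estimates. The right-hand side is bounded in $L^p$ uniformly in $\epsilon$ for any $p<\frac{1}{1-\beta}$, by the same H\"older-type computation used in the proof of Proposition~1, so Kolodziej's theorem gives a uniform $C^0$ bound on $\phi_\epsilon$. Since $Ric(\omega_{\phi_\epsilon})=(1-\beta)\chi_\epsilon\geq 0$, the Chern-Lu inequality (Lemma~1 with $C_1=0$) combined with the maximum principle applied to $\log\text{tr}_{\omega_{\phi_\epsilon}}\omega_0-(2\Lambda+1)\phi_\epsilon$ yields $\omega_0\leq C\omega_{\phi_\epsilon}$, and the Monge-Amp\`ere equation promotes this to the rough two-sided bound of Proposition~2. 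With the rough bound in place, the rest of Section~3 transfers essentially verbatim: the H\"older distance control, the diameter bound, and the key differential inequality $\Delta_{\omega_{\phi_\epsilon}}\{\log\sigma_{\epsilon,\beta'}+C(\beta-\beta')^{-2}(|z|^2+\epsilon)^{\beta-\beta'}+C\beta^{-2}(|z|^2+\epsilon)^\beta\}\geq 0$ depend only on the rough bound and on the local form of the Monge-Amp\`ere equation $\omega_{\phi_\epsilon}^n=e^{F_{\epsilon,\beta}}\omega_{\beta,\epsilon}^n$, not on any positivity of $\mu$. Running the local Moser iteration on the metric balls $V_\alpha\subset U_\alpha$ covering $D$, seeded by the explicit $L^2$ integrals against $(|z|^2+\epsilon)^{\beta-1}$, I would obtain $\text{tr}_{\omega_{\beta',\epsilon}}\omega_{\phi_\epsilon}\leq C$ on each $\mathcal V_\alpha$, hence $C^{-1}\omega_{\beta,\epsilon}^\alpha\leq\omega_{\phi_\epsilon}\leq C\omega_{\beta,\epsilon}^\alpha$. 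Standard Evans-Krylov-Schauder on $X\setminus D$ and a diagonal extraction then let me pass to the limit $\epsilon\to 0$, producing the desired weak Ricci-flat K\"ahler metric in $\mathfrak W$ with cone angle $2\pi\beta$ along $D$.

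The main technical obstacle is the uniform Sobolev constant that drives the Moser iteration. In the Fano setting it was supplied by the strictly positive Ricci lower bound $\beta\omega_{\phi_\epsilon}$ via Myers' theorem. Here the Ricci lower bound degenerates to $0$, so the Sobolev inequality has to be obtained by combining $Ric(\omega_{\phi_\epsilon})\geq 0$ with the uniform diameter bound (which depends only on the rough bound) and the cohomologically fixed total volume $[\omega_{\phi_\epsilon}]=[\omega_0]$; this non-collapsing-with-non-negative-Ricci input is the one ingredient that does not come for free and deserves careful verification.
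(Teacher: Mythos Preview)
Your proposal is essentially the paper's own proof: solve the smoothed Monge--Amp\`ere equation for each $\epsilon$ by Yau's theorem, get the uniform $C^0$ bound from Kolodziej's $L^p$ estimate, derive the rough bound via Chern--Lu, and then rerun the local Moser iteration of Section~3 using the Ricci lower bound, the diameter bound, and the fixed volume to supply the Sobolev constant. The structure and the identification of the Sobolev constant as the only nontrivial new input are exactly right.

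One small inaccuracy: you assert $Ric(\omega_{\phi_\epsilon})=(1-\beta)\chi_\epsilon\geq 0$, but without any positivity assumption on $c_1(L_D)$ (and the theorem imposes none) you only have $\chi_\epsilon\geq \frac{\epsilon}{|S|_h^2+\epsilon}\alpha_0\geq -C\omega_0$, hence $Ric(\omega_{\phi_\epsilon})\geq -C(1-\beta)\omega_0$. This does not affect the argument: the Chern--Lu inequality in the form $Ric(\omega_\phi)\geq -C_1\omega_\phi-C_2\omega_0$ still gives the rough bound, and once $\omega_0\leq C\omega_{\phi_\epsilon}$ you get a genuine (possibly negative) lower bound $Ric(\omega_{\phi_\epsilon})\geq -C\omega_{\phi_\epsilon}$, which together with the diameter and volume bounds is all that is needed for the uniform Sobolev constant. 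The paper makes exactly this point in Remark~2.
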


\begin{proof}

The equation $\omega_{\phi_\epsilon}^n=e^{h_{\omega_0}+C_\epsilon}\frac{\omega_0^n}{(|S|_h^2+\epsilon)^{1-\beta}}$ , where $C_\epsilon$ is a normalization constant such that $\int_X \frac{e^{h_{\omega_0+C_\epsilon}}}{(|S|_h^2+\epsilon)^{1-\beta}}\omega_0^n=\int_X \omega_0^n=1$,  could be solved by Yau's solution on the Calabi's Conjecture. The right hand side has uniform $L^p$ bound for $p<\frac{1}{1-\beta}$.  Using Kolodziej's local estimate, we could get a uniform $C^0$ bound on the whole manifold.

From $C^0$ estimate to $C^2$ estimate the above technique could be used.  $Ric(\omega_{\phi_\epsilon})=(1-\beta)\alpha_0+(1-\beta)\sqrt{-1}\partial\bar\partial \text{log }(|S|_h^2+\epsilon)\geq C(1-\beta)\omega_0$, then the Chern-Lu inequality (the version that $Ric(\omega_\phi)\geq -C_1\omega_\phi-C_2 \omega_0$, see Lemma 1) gives the \emph{rough estimate} first.  From this, the Ricci curvature is bounded form below since the rough bound $\omega_{\phi_\epsilon}\geq C^{-1}\omega_0$ tells that $\text{tr}_{\omega_{\phi_\epsilon}}\omega_0\leq C$. At the same time, since the rough bound also gives the uniform diameter upper bound, we have uniform Sobolev constant.  Then the technique used above could be applied to prove uniform $C^2$ estimate.
\end{proof}

\begin{rmk}
Ricci flat K\"{a}hler metric with conic singularities was first considered by Brendle under restrictions of the cone angle $\beta \in (0,\frac{1}{2})$, see \cite{Brendle}.  The above theorem removes the assumption on the cone angle while getting a Ricci flat K\"{a}hler metrics with conical singularities with weaker regularities than the result of Brendle \cite{Brendle}, see Section 3 for discussions.
\end{rmk}


Case 3: $\mu>0$ is obstructed as before, the solvability need the \emph{Properness} of the twisted K-Energy $E_{(1-\beta)\chi}$.  In this situation, the smooth continuity method could also be solved up to $t=1$ with uniform $C^0$ estimate by the same reasoning explained in section 2 above.

\begin{rmk}
The $C^0$ bound for the case $\mu<0$ and $\mu=0$ do not depend on positivity of $D$, which implies the \emph{rough estimate} from which we could get the lower bound on the Ricci curvature therefore the uniform upper bound on the Sobolev constant. However, for $\mu>0$, from bound on $I-J$ to $C^0$ bound, we need the uniform Sobolev constant. That's why in the Main Theorem $c_1(L_D)\geq 0$ is assumed. 
\end{rmk}

As soon as we have uniform $C^0$ bound on $\phi_\epsilon$ for equation $(8)$, we are ready to prove the uniform $C^2$ estimate. Since $\chi_\epsilon\geq \frac{\epsilon}{|S|_h^2+\epsilon}\alpha_0\geq -C\omega_0$, $Ric(\omega_{\phi_\epsilon})=\mu \omega_{\phi_\epsilon}+(1-\beta)\chi_\epsilon \geq \mu\omega_{\phi_\epsilon}-C(1-\beta)\omega_0$, the Chern-Lu inequality (See Lemma 1 above) gives the \emph{rough estimate}. Afterwards, we could prove the differential inequality , since the essential part is the calculation of the lower bound of $\Delta_{\omega_{\beta,\epsilon}}\text{log }\frac{|z|^2+\epsilon}{|z|^2+\epsilon e^h}$. And because $\sqrt{-1}\partial\bar{\partial}h=\alpha_0$ is a fixed smooth $(1,1)$ form, $h_{z\bar{z}}$ and $h_z$ is uniformly bounded, a similar argument tells us this quantity is bounded below by $-\frac{C}{|z|^2+\epsilon}\mathrm{d}z\wedge\mathrm{d}\bar{z}-C\sum_i\mathrm{d}z_i\wedge\mathrm{d}\bar{z}_i$. 

Another piece of information is the uniform Sobolev constants. The Ricci curvature is bounded below by $\mu-C(1-\beta)$ since $\text{tr}_{\omega_{\phi_\epsilon}}\omega_0\leq C$ by the rough bound $\omega_0 \leq C\omega_{\phi_\epsilon}$.  The lower bound on the Ricci curvature together with the diameter upper bound (Lemma 3) and the constant volume gives us the uniform Sobolev constant, thus the \emph{a priori} estimate on second derivative by $\emph{local Moser Iteration}$ goes through.\\



\section{Regularity}

By studying Calabi's third order estimate, Brendle showed the regularity of Ricci-flat K\"{a}hler metrics he obtained under the assumption $\beta \in(0,\frac{1}{2})$. In \cite{Yanir}, the group of authors studied the \emph{a priori $C^{2,\alpha}$ estimate} along the continuity path of conical singular K\"{a}hler metrics with edge and wedge calculus. And in \cite{Calamai}, Calamai and Zheng adapted Evans-Krylov theory to the conical setting to prove the \emph{a priori} $C^{2,\alpha}$ estimate for the continuity path of K\"{a}hler-Einstein metrics with conical singularities under a technical assumption that $\beta<\frac{2}{3}$. It is still interesting to know how to get the general regularity directly from the \emph{quasi-isometry} condition, i.e. whether we could get some uniform H\"{o}lder estimate of $\omega_{\phi_\epsilon}$ with respect to the local model metric $\omega_{\beta,\epsilon}$ or third order estimate. \\

When $\beta=1$, the weak K\"{a}hler-Einstein metric is actually smooth as shown in the appendix of \cite{Chen3}, the natural question is that whether it is \emph{smooth} with respect to the background conical K\"{a}hler metric for the general conical angle $2\pi \beta$, i.e. whether it is $C^\alpha$ in the singular coordinate around $D$? \\

\newpage

\vspace{2cm}

\indent{Chengjian Yao}\\
\indent{Department of Mathematics, Stony Brook University. Stony Brook, 11790, NY, USA.}
\indent{E-mail: yao@math.sunysb.edu}

\end{document}